\definecolor{refkey}{rgb}{1,0,1} 
\definecolor{labelkey}{rgb}{1,0,1}
\definecolor{darkgreen}{rgb}{0.0, 0.5, 0.0}
\definecolor{purple}{rgb}{0.5, 0.0, 0.5}
\newtheorem{theorem}{Theorem}[section]
\newtheorem{lemma}[theorem]{Lemma}
\newtheorem{definition}[theorem]{Definition} 
\newtheorem{proposition}[theorem]{Proposition}
\def\gap{\vspace{.3cm}\noindent}
\def\smallskip{\vspace{.15cm}}
\def\medskip{\vspace{.3cm}}
\def\text{\mbox}
\def\rh2{{\mathbb R}{\mathbb H}^2}
\def\ch2{{\mathbb C}{\mathbb H}^2}
\def\RP{\operatorname{\mathbb RP}}
\def\RPn{{\mathbb RP}^n}
\def\RP{{\mathbb RP}}
\def\R{\mathbb R}
\def\Hom{\operatorname{Hom}}
\def\PGL{\operatorname{PGL}}
\DeclareMathOperator{\area}{area}
\begin{document}

\title{The Area of Convex Projective Surfaces and Fock-Goncharov Coordinates}

\author{Ilesanmi Adeboye}
\author{Daryl Cooper}
\subjclass[2010]{57M50 Convex projective surface. Shape Parameter. Volume.}
\begin{abstract}
The area of a convex projective surface of genus $g\ge 2$ is at least $(g-1)\pi^2/2+\|\tau\|^2/8$ where
$\tau=(\log t_i)$ is the vector of triangle invariants of Bonahon-Dreyer and $t_i$ are the
Fock-Goncharov triangle coordinates.
\end{abstract}

\maketitle

A {\em convex projective surface} is $F=\Omega/\Gamma$ where $\Omega\subset\RP^2$ is 
the interior of a compact convex set disjoint from some projective line,
and $\Gamma\subset\PGL(3,\R)$ is a discrete subgroup that preserves $\Omega$
and acts freely on it. An example is a hyperbolic surface.
 Let $\RP(S)$ be the space of marked convex real-projective structures on a closed orientable surface $S$.
Goldman and Choi \cite{CG} showed that this can by identified with the 
Hitchin component of $\Hom(\pi_1S,\PGL(3,\R))/\PGL(3,\R)$. Bonahon and Dreyer \cite{BONAHON} showed:

\begin{theorem} Suppose $S$ is a closed orientable surface of genus $g\ge 2$. 
There are real analytic maps,
  the {\em triangle invariant} $\tau:\RP(S)\to\R^{4g-4}$, and the {\em shear invariant} $\sigma:\RP(S)\to \R^{12g-10}$, 
  so that $\beta=(\tau,\sigma):\RP(S)\to\R^{16g-14}$ is a real-analytic parameterization. The image of $\beta$
  is the an open cone  defined by 2 linear inequalities and 2 linear equalities. The image
  of $\tau$ is open in a codimension-1 subspace.   \end{theorem}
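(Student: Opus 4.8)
The plan is to realize $\RP(S)$ via the Fock--Goncharov description of the $\PGL(3,\R)$-Hitchin component in terms of positive equivariant flag curves, read through a fixed combinatorial model of $S$. First I would fix, once and for all, a maximal geodesic lamination $\lambda\subset S$ whose closed leaves form a pants decomposition $\gamma_1,\dots,\gamma_{3g-3}$ and whose complementary regions are the $4g-4$ ideal triangles $T_1,\dots,T_{4g-4}$ cut out by spiralling auxiliary leaves into the $\gamma_k$. Passing to the universal cover $\widetilde S$, identify its ideal boundary circle with the Gromov boundary $\partial\pi_1S$ and lift $\lambda$ to $\widetilde\lambda$. By Choi--Goldman, $\RP(S)$ is the Hitchin component of $\Hom(\pi_1S,\PGL(3,\R))/\PGL(3,\R)$; by Labourie and Fock--Goncharov every Hitchin representation $\rho$ carries a unique $\rho$-equivariant positive (hyperconvex) flag curve $\xi_\rho\colon\partial\pi_1S\to\mathrm{Flag}(\R^3)$, depending real-analytically on $\rho$, and conversely a positive equivariant flag curve recovers $\rho$ up to conjugacy.

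Next I would define the invariants by restricting $\xi_\rho$ to the ideal vertices of $\widetilde\lambda$. For each triangle $T_i$ pick a lift with vertices $x,y,z\in\partial\pi_1S$ in positive cyclic order; the Fock--Goncharov triple ratio $t_i=T_3(\xi_\rho x,\xi_\rho y,\xi_\rho z)\in\R_{>0}$ is a complete $\PGL(3,\R)$-invariant of a positively positioned triple of flags (there is exactly one triple ratio per triangle for $\PGL(3,\R)$), and equivariance makes $\log t_i$ independent of all choices. This gives the real-analytic triangle invariant $\tau=(\log t_i)\colon\RP(S)\to\R^{4g-4}$. For each leaf of $\widetilde\lambda$ bounding two triangles, the four vertices of the adjacent ideal quadrilateral yield four flags whose Fock--Goncharov edge coordinates (two per non-closed edge, together with the two shearing parameters attached to each closed leaf $\gamma_k$), after taking logarithms, assemble into a real-analytic shear invariant $\sigma\colon\RP(S)\to\R^{12g-10}$. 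Real-analyticity of $\beta=(\tau,\sigma)$ is then automatic, since triple and double ratios are rational in flag data and $\xi_\rho$ varies real-analytically.

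To prove $\beta$ is a parameterization onto the stated cone I would run one reconstruction argument serving both injectivity and surjectivity. Fix a base triangle of $\widetilde\lambda$ and, in a fixed $\PGL(3,\R)$-gauge, build three flags realising the prescribed triple ratio; then cross the leaves of $\widetilde\lambda$ one at a time, each crossing determining the flag at the newly reached vertex from the prescribed edge coordinates and the triple ratio of the triangle just entered. Since the ideal vertices of $\widetilde\lambda$ are dense in $\partial\pi_1S$, this shows: (i) when the data is $\beta(\rho)$ the reconstruction recovers $\xi_\rho$, hence $\rho$ up to conjugacy, so $\beta$ is injective; (ii) for abstract data satisfying the defining (in)equalities, the reconstruction produces an equivariant map on a dense set that extends continuously to a positive equivariant flag curve --- the two linear inequalities being exactly the conditions that force every flag configuration occurring along the boundary circle to be positively positioned --- so by Fock--Goncharov it is $\xi_\rho$ for a Hitchin $\rho$ with the given $\beta(\rho)$, whence surjectivity. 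As the reconstruction is given by explicit rational formulas, $\beta^{-1}$ is real-analytic, so $\beta$ is a real-analytic diffeomorphism onto its image. The two linear equalities are the global closing-up constraints: the edge coordinates of the leaves spiralling into a fixed $\gamma_k$, and the triple ratios of the two triangles they separate, must reassemble consistently after a full turn around $\gamma_k$ and around the surface; these reassembly conditions collapse to two independent linear equations, and the relation they induce on the $\tau_i$ alone is a single linear equation, so the image of $\tau$ is open in a codimension-one subspace.

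The main obstacle is the positivity bookkeeping behind (ii): one must show that the \emph{a priori} infinitely many hyperconvexity conditions on $\xi$ are implied by finitely many explicit linear inequalities in the coordinates, and that the global reassembly produces \emph{exactly} two linear equalities and two linear inequalities, no more. This is where the closed leaves of $\lambda$ and the topology of the closed surface genuinely enter, and it forces one to interleave Bonahon's shear-coordinate calculus for laminations with the combinatorics of Fock--Goncharov positivity; the delicate case is that of the non-peripheral closed leaves $\gamma_k$, which are absent in the once-punctured or ideally-triangulated settings.
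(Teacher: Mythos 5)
This statement is not proved in the paper at all: it is quoted as a known result of Bonahon--Dreyer (the paper introduces it with ``Bonahon and Dreyer \cite{BONAHON} showed'' and immediately moves on), so there is no internal proof to compare against. Your outline does follow the same route as the cited source --- Choi--Goldman to identify $\RP(S)$ with the Hitchin component, the Labourie/Fock--Goncharov positive equivariant flag curve, triple ratios for the triangle invariants and double ratios plus shearing data along closed leaves for the shear invariants, and a reconstruction argument for bijectivity --- so as a roadmap it is faithful to how the theorem is actually established.

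As a proof, however, it has a genuine gap, and it is exactly the one you flag yourself in the last paragraph. The entire content of the theorem beyond ``these coordinates exist and are real-analytic'' is the precise description of the image: that the a priori infinite family of positivity/hyperconvexity conditions on the reconstructed flag curve collapses to exactly two linear inequalities, that the closing-up conditions around the closed leaves of the lamination collapse to exactly two linear equalities (no more), and that the induced relation on the $\tau_i$ alone is a single linear equation (giving the codimension-one statement for the image of $\tau$). You assert all three of these outcomes but derive none of them; in particular the continuous extension of the reconstructed equivariant map from the dense set of ideal vertices to a genuinely positive curve on all of $\partial\pi_1S$ is the hard analytic-combinatorial step, and the count $12g-10$ for the target of $\sigma$ (rather than the naive $12g-12$ from two coordinates per edge) is never verified --- it depends on the extra shearing parameters along the closed leaves, which is precisely the delicate case you defer. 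So the proposal is a correct plan matching the literature, but not a proof: the stated conclusion about the image being cut out by exactly $2+2$ linear conditions is where all the work lives, and it remains unproved in your write-up.
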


The Hilbert metric $d_{\Omega}$ on a bounded convex set  $\Omega\subset {\mathbb R}^2$ is a Finsler metric
normalized so that when $\Omega$ is the interior of  the unit disc the Hilbert metric is the
 hyperbolic
 metric (curvature $-1$). A Finsler metric determines a
measure   called {\em p-area} or just {\em area} that is
 the largest measure in the same measure class as Lebesgue
so that 
the area of an infinitesimal parallelogram is at most the product of the side lengths, 
see  (\ref{meas:def}).
In general this is {\em not Hausdorff measure} \cite{PAIVA} but coincides with it for Riemannian metrics. 
For example the unit ball in the
sup-norm is a square of side-length 2 which has p-area equal to the Euclidean
area $4$, and in  the taxicab norm the unit ball has p-area 2 but the  Hausdorff measure of the unit ball of a norm
is always $\pi$.
When applied to the Hilbert metric we call p-area  the {\em Hilbert area} or just {\em area}.
 In the positive quadrant the Hilbert area form is  $dxdy/(4xy)$ see (\ref{positivequad}).

An ideal triangle in $\Omega$ has a {\em shape parameter} $t>0$. This is one of the coordinates introduced by Fock
 and Goncharov \cite{FG}. The {\em triangle invariant} in the Bonahon-Dreyer theorem 
 for this triangle is $\tau=\log t$ 
which is a certain {\em signed distance}, see
 Figures (\ref{inscribetriangle})  and (\ref{domaintriangle}).
 Our main result is:
\begin{theorem}\label{maintheorem} Suppose $\mathcal T$ is an ideal triangulation of a surface $F$.
The map $\alpha_{\mathcal T}:\RP(S)\to\R$ defined by $$\alpha_{\mathcal T}:(F)=(g-1)\pi^2/2+\|\tau\|^2/8$$
satisfies $\area(F)\ge\alpha(F)$. Here $\tau=(\tau_1,\cdots,\tau_{4g-4})$ are the components of $\tau$ and $\|\tau\|^2=\sum\tau_i^2$.
\end{theorem}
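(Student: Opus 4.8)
The plan is to cut $F$ along the ideal triangulation $\mathcal{T}$ and estimate one complementary ideal triangle at a time.

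\emph{Step 1: reduction to a single triangle.} First I would use the Bonahon--Dreyer set--up: the one--skeleton of $\mathcal{T}$ is realized in the Hilbert metric of $F$ as a geodesic lamination $\lambda$ whose complement is a disjoint union of $4g-4$ open ideal triangles $\Delta_1,\dots,\Delta_{4g-4}$ (this is the Euler characteristic count $-2\chi(S)=4g-4$ that also explains why $\tau\in\R^{4g-4}$). Pulling back to the universal cover, each $\Delta_i$ lifts to an ideal triangle $\widetilde\Delta_i\subset\Omega$ whose three sides are projective line segments joining three points of $\partial\Omega$ (chords, which are Hilbert geodesics). Since $\lambda$ has zero Lebesgue measure, hence zero Hilbert area, one has $\area(F)=\sum_{i=1}^{4g-4}\area(\widetilde\Delta_i)$, the areas taken in the Hilbert metric of $\Omega$. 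So it is enough to prove that a single ideal triangle $\Delta\subset\Omega$ with Fock--Goncharov shape parameter $t$ and triangle invariant $\tau=\log t$ satisfies
\[
\area_\Omega(\Delta)\ \ge\ \tfrac18\bigl(\pi^2+\tau^2\bigr);
\]
summing over the $4g-4$ triangles then gives $\area(F)\ge (4g-4)\pi^2/8+\|\tau\|^2/8=(g-1)\pi^2/2+\|\tau\|^2/8$, which is the theorem.

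\emph{Step 2: comparison with the tangent triangle.} Next, since $\Omega$ is strictly convex with $C^1$ boundary (standard for the divisible convex sets associated with closed surface groups), each vertex $a,b,c$ of $\Delta$ carries a well--defined supporting line $A,B,C$ of $\Omega$. Generically these three lines are in general position and bound a projective triangle $T=T_{ABC}$, with $a,b,c$ lying one in the interior of each edge of $T$; since $A,B,C$ support $\Omega$ one gets the nesting $\Delta\subset\Omega\subset T$. The Hilbert metric is monotone under inclusion --- shrinking the convex domain only enlarges the Finsler norm at each tangent vector, hence enlarges the area density --- so $\area_\Omega(\Delta)\ge\area_T(\Delta)$. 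The right--hand side is a projective invariant of the quadruple $(T,a,b,c)$, and $\PGL(3,\R)$ acts on such quadruples with a one--dimensional quotient; the surviving modulus is exactly the Fock--Goncharov parameter $t$, namely the triple ratio of the flags $(a,A),(b,B),(c,C)$.

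\emph{Step 3: the model computation.} Finally I would evaluate $\area_T(\Delta)$ explicitly. Normalize by an element of $\PGL(3,\R)$ so that $T$ is the closure of the positive quadrant $Q=\{x,y>0\}$, that $a=(1,0)$ and $b=(0,1)$ lie on the two axes, and that $c$ is the point at infinity in the direction $(t,1)$ (this is the normalization in which the triple ratio equals $t$). Then $\Delta$ is the unbounded region
\[
\bigl\{\,(1-u,u)+s\,(t,1)\ :\ u\in[0,1],\ s\ge 0\,\bigr\}\subset Q .
\]
Using the Hilbert area form $dx\,dy/(4xy)$ on $Q$, the change of variables $(x,y)=(1-u+st,\ u+s)$, and the elementary identity
\[
\int_0^\infty \log w\left(\frac{1}{w-1}-\frac{1}{w+m}\right)dw\ =\ \frac{\pi^2}{2}+\frac12(\log m)^2\qquad(m>0),
\]
(equivalently $\int_0^\infty x/\sinh x\,dx=\pi^2/4$, or a dilogarithm evaluation), a short computation gives
\[
\area_T(\Delta)\ =\ \frac{\pi^2+(\log t)^2}{8}\ =\ \frac{\pi^2+\tau^2}{8}.
\]
Together with Step~2 this yields the single--triangle estimate, hence the theorem.

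\emph{On the difficulties.} The integral in Step~3, while a little lengthy, is routine once the picture is set up. The hard part will be the bookkeeping: matching normalizations so that the one remaining modulus of $(T,a,b,c)$ is precisely the Fock--Goncharov coordinate $t$ of the original triangle in $\Omega$ (any convention that replaces $t$ by $t^{\pm1}$ is harmless, since only $\tau^2=(\log t)^2$ enters), checking that the supporting lines at the three vertices are in general position so that $T$ is an honest triangle --- the borderline cases being handled by the same computation in a triangle with one or two ideal vertices --- and verifying that the geodesic realization of $\mathcal{T}$ genuinely decomposes $\Omega$ into straight--sided ideal triangles, which is part of the Bonahon--Dreyer framework.
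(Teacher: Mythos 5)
Your proposal follows the paper's proof essentially verbatim: decompose $F$ into $4g-4$ ideal triangles, compare the Hilbert metric of $\Omega$ with that of the circumscribed triangle formed by the tangent (supporting) lines at the three vertices via monotonicity of the Hilbert metric under inclusion of domains, and evaluate the resulting model integral for an ideal triangle in the positive quadrant to obtain $(\pi^2+\tau_i^2)/8$ per triangle, with equality exactly when $\Omega$ is a triangle. The only divergence is in how the model integral is computed --- you invoke a dilogarithm-type identity after a change of variables, whereas the paper foliates the triangle by the lines $x+y=s$, writes the area as $\int_1^\infty \ell(s)\,ds/s$, differentiates under the integral sign to get $dB/dt=t^{-1}\log t$, and pins down the constant $B(1)=\pi^2/2$ by a contour integral --- a cosmetic difference in an otherwise identical argument.
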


\noindent  This follows from:

\begin{proposition}\label{triangleprop} If $T$ is an ideal triangle with shape parameter $t$ in a properly convex domain $\Omega$ then $$area_{\Omega}(T)\ge(\pi^2+(\log t)^2)/8$$with equality iff $\Omega$ is the interior of a triangle.
\end{proposition}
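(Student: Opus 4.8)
The plan is to reduce, using monotonicity of the Hilbert metric, to the case where $\Omega$ is itself a triangle, and then to compute $\area_\Omega(T)$ explicitly in a normal form adapted to the isometry group of a triangle.

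\emph{Reduction to a triangle.} Let $v_1,v_2,v_3\in\bOmega$ be the vertices of $T$, and let $L_i$ be the projective line through $v_i$ that is the line-part of the flag at $v_i$ used to define the shape parameter; for a convex projective structure this flag line is a supporting line of $\Omega$, so the closed half-plane $H_i$ bounded by $L_i$ and containing $\Omega$ is well defined, and $\Omega\subseteq\Delta:=H_1\cap H_2\cap H_3$. One checks that $\Delta$ is a properly convex triangle with $v_i$ in the interior of the edge lying on $L_i$: $\Delta$ is one of the four closed regions cut from $\RP^2$ by the three lines, and if it were a wedge or a half-plane rather than a triangle then $\partial\Delta$ would lie on only two of the lines $L_i$, forcing two of the vertices $v_j$ onto a common line $L_k$; but then the edge of $T$ joining those two vertices would lie in $L_k\cap\partial\Delta$, hence outside $\Omega$, contradicting that the edges of an ideal triangle are open geodesic segments of $\Omega$. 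Hence $T$ is an ideal triangle in $\Delta$ as well, with the \emph{same} three flags, and since the shape parameter is the projective invariant (triple ratio) of that triple of flags, the shape parameter of $T$ in $\Delta$ is again $t$. From $\Omega\subseteq\Delta$ the Hilbert Finsler norm of $\Omega$ dominates that of $\Delta$ at every point of $\Omega$, so $\area_\Omega(T)\ge\area_\Delta(T)$; moreover if $\Omega$ is a triangle then $\Omega=\Delta$ (its only supporting line at $v_i$ is the edge through $v_i$), while if $\Omega\subsetneq\Delta$ one gets strict inequality by examining the (hexagonal) unit sphere of the Hilbert metric of $\Delta$ and the directions, from points of $\Int T$, in which $\bOmega$ lies strictly inside $\partial\Delta$. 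So it remains to prove the equality $\area_\Delta(T)=(\pi^2+(\log t)^2)/8$ for a triangle $\Delta$.

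\emph{Normal form and the shape parameter.} By a projective transformation send $\Delta$ to the positive quadrant $Q=\{(x,y):x,y>0\}$, where the Hilbert area form is $dx\,dy/(4xy)$. Each vertex of $T$ lies in the interior of a distinct one of the three edges of $Q$ (the two positive half-axes and the arc of the line at infinity), since otherwise some edge of $T$ would run along $\partial Q$. Using the torus of isometries $(x,y)\mapsto(\lambda x,\rho y)$ of $Q$, move the vertex on the $x$-axis to $(1,0)$ and the one on the $y$-axis to $(0,1)$; the third vertex is then the point at infinity in a direction $(1,\mu)$ with $\mu>0$. A direct evaluation of the Fock--Goncharov triple ratio of the three flags — the vertices $v_i$ together with the three lines carrying the edges of $Q$ — gives $t=1/\mu$ (up to the convention $t\leftrightarrow 1/t$, which does not matter since the asserted bound is symmetric in $t$ and $1/t$).

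\emph{The computation.} The edges of $T$ lie on the lines $x+y=1$, $y=\mu(x-1)$ and $y=\mu x+1$, so
$$T=\{(x,y):x+y\ge1,\ \mu(x-1)\le y\le\mu x+1\}$$
(the inequalities $x,y\ge0$ being automatic), and hence
$$\area_\Delta(T)=\frac14\iint_T\frac{dx\,dy}{xy}=\frac14\left(\int_0^1\frac{1}{x}\log\frac{\mu x+1}{1-x}\,dx+\int_1^\infty\frac{1}{x}\log\frac{\mu x+1}{\mu(x-1)}\,dx\right).$$
The substitution $x\mapsto 1/x$ sends the second integral to $\int_0^1 x^{-1}\log\bigl((x+\mu)/(\mu(1-x))\bigr)\,dx$; expanding the logarithms and using $\int_0^1 x^{-1}\bigl(-\log(1-x)\bigr)\,dx=\pi^2/6$ together with $\int_0^1 x^{-1}\log(1+ax)\,dx=-\operatorname{Li}_2(-a)$, where $\operatorname{Li}_2(z)=\sum_{n\ge1}z^n/n^2$, one obtains
$$\area_\Delta(T)=\frac14\left(\frac{\pi^2}{3}-\operatorname{Li}_2(-\mu)-\operatorname{Li}_2(-1/\mu)\right),$$
and the dilogarithm inversion identity $\operatorname{Li}_2(-\mu)+\operatorname{Li}_2(-1/\mu)=-\pi^2/6-\tfrac12(\log\mu)^2$ gives $\area_\Delta(T)=\bigl(\pi^2+(\log\mu)^2\bigr)/8=\bigl(\pi^2+(\log t)^2\bigr)/8$, as required. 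The heart of the argument is this closing computation, which is what produces the exact constants $\pi^2$ and $(\log t)^2$; the steps needing genuine care are that the shape parameter is unchanged on passing from $\Omega$ to the circumscribed triangle $\Delta$, and the strictness of the inequality in the equality case.
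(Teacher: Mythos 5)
Your proof is correct, and its first half --- circumscribing the tangent triangle $\Delta$ cut out by the three flag lines, checking that the shape parameter is unchanged, and invoking monotonicity of the Hilbert metric and of p-area under inclusion --- is exactly the paper's reduction (indeed the paper \emph{defines} $t(T,\Omega)$ to be $t(T,\Delta)$, so that step is immediate there). Where you genuinely diverge is in computing $\area_\Delta(T)$. The paper never writes the area as a double integral: it identifies the Hilbert geometry of $\Delta$ with the hexagonal-normed plane, foliates $T$ by the lines $x+y=s$, uses the fact that in Hex geometry a parallelogram has p-area equal to base times height to reduce to the single integral $B(t)=\int_1^\infty s^{-1}\log\bigl((st+1)(s+t)/(t(s-1)^2)\bigr)\,ds$, then differentiates under the integral sign to get $B'(t)=t^{-1}\log t$, and finally evaluates the constant $B(1)=\pi^2/2$ by a contour integral. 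You instead integrate the area form $dx\,dy/(4xy)$ directly over the same normal form of $T$, land on dilogarithms, and finish with the inversion identity $\operatorname{Li}_2(-\mu)+\operatorname{Li}_2(-1/\mu)=-\pi^2/6-\tfrac12(\log\mu)^2$; this is shorter and avoids both the Hex-parallelogram lemma and the residue computation, at the cost of quoting a nontrivial special-function identity (which is, of course, where the same $\pi^2$ is hiding). Two small caveats. First, in the paper's convention your normal form gives $t=\mu$ rather than $1/\mu$, which, as you note, is immaterial since the bound is symmetric under $t\mapsto 1/t$. Second, your sketch of strictness when $\Omega\subsetneq\Delta$ is the one genuinely delicate point: because p-area is defined by a supremum over parallelograms with sides in the unit ball, strictly shrinking the unit ball in some cone of directions does not automatically decrease that supremum (a hexagon with a small dent away from its vertices and away from a full edge still admits a maximal parallelogram), so this step needs a real argument; the paper itself silently omits the ``only if'' direction of the equality case, so you are no worse off, but you should not present it as routine.
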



 The area of a 
compact hyperbolic surface $F$ is $2\pi|\chi (F)|$ and in particular is bounded below. 
A corollary of the above gives a {\em lower bound} on the area of a convex projective surface
 by using an ideal triangulation of the surface. 
\begin{theorem}\label{thm} If $F$ is a compact, properly convex projective surface then
 $\area(F)\ge (\pi/2)^2\cdot |\chi (F)|$.\end{theorem}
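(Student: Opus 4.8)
The plan is to read Theorem \ref{thm} off from Proposition \ref{triangleprop} (equivalently, from Theorem \ref{maintheorem} by discarding the nonnegative term $\|\tau\|^2/8$). First I would dispose of the degenerate cases. Since $F=\Omega/\Gamma$ is a compact quotient of the open surface $\Omega$ by a group acting freely and properly discontinuously, $F$ is automatically closed, so $\chi(F)\le 0$; when $\chi(F)=0$ the inequality is vacuous because $\area(F)\ge 0$. If $F$ is non-orientable with $\chi(F)<0$, I would pass to the orientable double cover $\widehat F=\Omega/\widehat\Gamma$, again a compact properly convex projective surface, which satisfies $\area(\widehat F)=2\,\area(F)$ and $\chi(\widehat F)=2\,\chi(F)$; the orientable case applied to $\widehat F$ then yields the claim for $F$. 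So it suffices to treat a closed orientable surface $F=\Omega/\Gamma$ of genus $g\ge 2$, for which $|\chi(F)|=2g-2$.

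For such an $F$ I would fix a maximal geodesic lamination $\lambda$ of the Hilbert metric $d_\Omega$, as in the framework of Bonahon--Dreyer. Its complementary regions are open ideal triangles $T_1,\dots,T_N$; there are exactly $N=4g-4$ of them, as one sees in the hyperbolic case by dividing the total area $\pi(4g-4)$ by the area $\pi$ of an ideal triangle, the count being a topological invariant of the maximal lamination and hence independent of the metric. Each $T_j$ lifts to an ideal triangle $\widehat T_j\subset\Omega$ with some shape parameter $t_j>0$. Since $\lambda$ is $d_\Omega$-null (as in the hyperbolic setting; alternatively one may replace $\lambda$ by a topological ideal triangulation with geodesic edges, so that the exceptional set becomes a finite union of complete geodesics, of measure zero), the Hilbert area is additive over the plaques:
$$\area(F)=\sum_{j=1}^{N}\area_\Omega(\widehat T_j).$$

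Now I would apply Proposition \ref{triangleprop} to each plaque: whatever the value of $t_j$,
$$\area_\Omega(\widehat T_j)\ \ge\ \frac{\pi^2+(\log t_j)^2}{8}\ \ge\ \frac{\pi^2}{8}.$$
Summing over $j=1,\dots,N$ with $N=4g-4$ gives
$$\area(F)\ \ge\ (4g-4)\cdot\frac{\pi^2}{8}\ =\ (g-1)\frac{\pi^2}{2}\ =\ (\pi/2)^2(2g-2)\ =\ (\pi/2)^2\,|\chi(F)|,$$
which is the assertion. (Retaining the terms $(\log t_j)^2$ rather than discarding them recovers the sharper bound of Theorem \ref{maintheorem}.)

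The steps above are formal once one grants the two facts used in the middle paragraph, and this is where I expect the only real work to be: that a maximal geodesic lamination exists with exactly $4g-4$ complementary ideal triangles, and that it is negligible for Hilbert area so that $\area(F)$ is the sum of the plaque areas. The first is purely topological; the second is classical for hyperbolic structures and can be avoided altogether by working with a topological ideal triangulation. Both are part of the setup of \cite{BONAHON} already invoked in Theorem \ref{maintheorem}, so beyond them Proposition \ref{triangleprop} makes the estimate immediate.
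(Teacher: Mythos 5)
Your argument is correct and is essentially the paper's own: decompose $F$ into $2|\chi(F)|=4g-4$ ideal triangles (the paper's Proposition \ref{idealtriangnumber}, which your lamination/triangulation discussion reproduces), bound each triangle's area below by $\pi^2/8$ via Proposition \ref{triangleprop}, and sum. The reductions you add for $\chi(F)=0$ and the non-orientable case are harmless extra care that the paper leaves implicit.
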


It follows that if $Q$ is a compact $2$-orbifold with $\chi^{orb}(Q)<0$ 
then every convex projective structure on $Q$ has area at least $(\pi/2)^2\cdot |\chi^{orb}(Q)|$.
In particular this area is bounded below by $\pi^2/168$. The first author  has given lower bounds
on the volumes of {\em hyperbolic orbifolds} \cite{A1}; and with {\em Guofang Wei} see also \cite{A2} and 
for {\em complex hyperbolic orbifolds} \cite{A3}.

With the above  normalization the Hilbert metric on the interior of the unit disc equals the hyperbolic metric. 
The above lower bound  is $\pi/8\approx 38\%$ of the hyperbolic area. In the remainder
of this paper we normalize the Hilbert metric so that it is {\em twice} that above. This means the areas calculated in
the rest of the paper should be divided by $4$ to give the results announced.

This result extends to complete convex projective structures $2$-orbifolds and to surfaces with cusps. Theorem \ref{maintheorem} suggests
various questions. For example there is a graph $\Gamma$ with a vertex for each ideal triangulation $\mathcal T$ 
of $F$ and edges that correspond to edge flips. Given a strictly convex structure on $F$ 
what are the properties
of the function defined on the vertices of $\Gamma$ by $\mathcal T\mapsto \alpha_{\mathcal T}(F)$ ?
Is the maximum attained ? Is there a uniform upper bound on the difference between the maximum and the area of $F$ ?

 It follows from the Margulis lemma for
  convex projective orbifolds \cite{CLT1}, \cite{CM1} that there is a lower bound on the Hilbert volume of a strictly convex projective 
  $n$-orbifold. What are explicit lower bounds ?
  
 The proposition is proved by using the following observation. An ideal triangle in a domain
 $\Omega$ meets $\partial\Omega$ at its three vertices. There is a triangle $\Delta$ that contains 
 $\Omega$ and is
 tangent to $\Omega$ at these three points, see Figure (\ref{inscribetriangle}). The Hilbert metric given by $\Delta$ is
 smaller than that given by $\Omega$. Thus  the area of 
 $T$ in the Hilbert metric on $\Omega$ is bigger than its area
 using the Hilbert metric from $\Delta$. We explicitly calculate the latter. The theorem follows
 by using ideal triangulations.
 
 After the first version of this paper was written the authors learned of \cite{COLBOIS}. In that paper the authors show that
 the (Hausdorff) area of an ideal triangle is bounded below, and also that it is not bounded above.
 Their lower bound is different because they
 use a different definition of {\em area}.
 However, as Marquis remarks in \cite{AROUND}, although there are different notions of area for Finsler metrics,
 because of the Benz{\'e}cri compactness theorem \cite{benz}, there is a universal bound on the ratios
 of different area forms for reasonable choices. The point of this paper is the relation to Fock-Goncharov coordinates.

The authors thank the MRC at Snowbird 2011 and the MSRI during spring of 2015 for hospitality while parts of this paper
were written. The authors acknowledge support from U.S. National Science Foundation grants DMS 1107452, 1107263, 1107367 ``RNMS: GEometric structures And Representation varieties'' (the GEAR Network). 
Cooper was partially supported by NSF grants 1065939, 1207068 and  1045292 and
 thanks IAS and the Ellentuck Fund for partial support during completion of this paper. 

\
\section{Length and Area in Hilbert geometry}

The \textbf{cross-ratio} of four distinct points $y_1,y_2,y_3,y_4\in\mathbb{R}$ is 
\[
cr(y_1,y_2,y_3,y_4)=\frac{(y_1-y_3)}{(y_1-y_2)}\frac{(y_2-y_4)}{(y_3-y_4)}
\]
Using the embedding $\R\subset\RP^1$ given by $y/x\mapsto[x:y]$ cross-ratio extends to a continuous map
$cr:X\longrightarrow\RP^1$ where $X\subset\left(\RP^1\right)^4$ is the subset
of quadruples of points at least 3 of which are distinct
$$cr([x_1:y_1],[x_2:y_2],[x_3:y_3],[x_4:y_4])=[(x_2y_1-x_1y_2)(x_4y_3-x_3y_4):(x_3 y_1-x_1y_3 ) (x_4 y_2-x_2y_4 )]$$

Suppose $\Omega\subset\R^n\subset\RPn$ is an open convex set that contains no affine line.
Given $b,c\in \Omega$ there is   a projective line $\ell$ in $\RPn$ that contains them. This line meets
 $\partial\overline\Omega\subset \RPn$ in two distinct points
$a,d$. Label these points so  $a,b,c,d$ are in linear order
along  $\ell\cap\R^n$. Since $\Omega$ contains no affine line $a\ne d$. 
The {\em Hilbert metric on $\Omega$} is$$d_{\Omega}(b,c)= | \log cr(a,b,c,d) |$$
Some authors use $(1/2)$ of this so that when $\Omega$ is the unit ball then $d_{\Omega}$ 
has curvature $-1$. This is a Finsler metric and
$$d_{\Omega}(x,x+dx)=\left(\frac{1}{|x-a|}+\frac{1}{|x-b|}\right)dx$$
In particular if $\Omega=(0,\infty)\subset\R^1$ then $a=0,b=\infty$ and $d_{\Omega}(x,x+dx)=dx/x$.

The literature contains many distinct definitions of area for Finsler metrics \cite{PAIVA},\cite{area2}.  
These depend on how {\em area} is defined for a normed vector space $(V,\|\cdot\|)$. Area is a Borel measure on $V$ 
that is preserved by translation (ie a Haar measure) so
it is some multiple of Lebesgue measure.
We will adopt the following definition which is particularly well suited for our purposes.

\begin{definition}\label{meas:def} Suppose $(V,\|\cdot\|)$ is a normed 2-dimensional real vector space. 
Choose an inner product on $V$ and let $\lambda$ be the resulting Lebesgue measure. Set
$K=\sup\lambda(\{\alpha a+\beta b\ :\ 0\le \alpha,\beta\le 1\ \})$ where the supremum is over all
$a,b\in V$ with $\|a\|,\|b\|\le 1$. Then define 
the {\em parallelogram measure} or {\em p-area} $\omega_{\|\cdot\|}$ on $V$ by
 $\omega_{\|\cdot\|}=K^{-1}\cdot\lambda$.
\end{definition}
It is easy to check the definition is independent of the choice of inner product. 
 Having done this
we dispense with the inner product, and refer to the norm of a vector as its {\em length}.
The definition is equivalent to declaring that
the maximum area of a parallelogram with sides of unit length  is $1$.  If the norm
is given by an inner product then parallelogram measure coincides with the usual
area. This definition generalizes to $n$ dimensions using parallelopipeds spanned by
$n$ vectors of norm one.

A {\em rectangle} in $(V,\|\cdot\|)$ is defined to be any parallelogram with side lengths $x$ and $y$
and area $xy$. Such parallelograms always exist. This enables the standard construction
of Lebesque measure in the plane, starting from an inner product, to be extended to an arbitrary norm on the plane.

Parallelogram measure is an {\em increasing} function of the metric in the sense that
if $\|\cdot \|$ and $\|\cdot\|'$ are two norms on $V$ with $\|\cdot\|\le\|\cdot\|'$ then $\omega_{\|\cdot\|}\le\omega_{\|\cdot\|'}$.
In particular if $\alpha>0$ then $\omega_{\alpha\|\cdot\|}=\alpha^2\omega_{\|\cdot\|}$.

A {\em Finsler surface} is a pair $(S,ds)$ where $S$ is a smooth surface and $ds_x$  is a norm on $T_xS$ for each $x\in S$. 
The {\em p-area form} on $S$ is the p-area form
for $ds_x$ on $T_xS$. For a properly convex projective surface $S$ the resulting area form $\omega_S$
 is called the {\em Hilbert area form}, and
 the {\em Hilbert area of $S$} is $\int_s \omega_S$.
If $\Omega'\subset\Omega$ are properly convex
then $d_{\Omega}\le d_{\Omega'}$ on $\Omega'$ and $\mu_{\Omega}\le\mu_{\Omega'}$.

\section{Hex geometry}

A reference for this section is \cite{delaharpe}. 
Let $u_0,u_1,u_2\in {\mathbb R}^2$ be unit vectors with respect to the standard inner product
such that $u_0+u_1+u_2=0$. We will use $u_0=(1,0)$ and $u_1=(-1/2,\sqrt{3}/2)$ then $u_2=-u_0-u_1$. The convex hull
of the vectors $\{\pm u_0,\pm u_1,\pm u_2\}$ is a regular Hexagon $H$.
\begin{definition}The {\em Hex plane} $({\mathcal H},d_{\mathcal H})$ is the metric space obtained from the  norm on 
${\mathbb R}^2$ with unit ball $H$. 
\end{definition}

For the Hex plane p-area is different to 
Busemann volume, used for example in \cite{AROUND}, or Holmes-Thompson used in \cite{CV}. 
On a normed plane all these measures are multiples of Haar measure, and so they are multiples of each other.
We will
describe some properties of p-area for the Hex plane which suggest this is the {\em right} definition to use.
\begin{lemma}\label{conversionfactor}  Let $\omega_{\mathcal H}$ denote the area form on the Hex plane and $\lambda$ be Lebesgue measure with respect to the standard inner product. Then
$\omega_{\mathcal H}=(2/\sqrt{3})\lambda$  
\end{lemma}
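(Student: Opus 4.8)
The plan is to compute the normalizing constant $K$ of Definition \ref{meas:def} directly for the norm whose unit ball is the hexagon $H$, taking $\lambda$ to be Lebesgue measure for the standard inner product. Since the set $\{\alpha a+\beta b:0\le\alpha,\beta\le 1\}$ is the parallelogram with vertices $0,a,b,a+b$, its $\lambda$-measure equals $|\det[a\,|\,b]|$, the absolute value of the determinant whose columns are $a$ and $b$. Hence $K=\sup\{\,|\det[a\,|\,b]|:a,b\in H\,\}$, and once I show $K=\sqrt3/2$ the lemma is immediate, since $\omega_{\mathcal H}=K^{-1}\lambda=(2/\sqrt3)\lambda$.

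To evaluate this supremum I would first reduce it to vertices of $H$. Fixing $b\in H$, the map $a\mapsto\det[a\,|\,b]$ is linear, so $a\mapsto|\det[a\,|\,b]|$ is convex on $\mathbb R^2$ and attains its maximum over the compact convex hexagon $H$ at a vertex of $H$; freezing that vertex and varying $b$, the same argument places a vertex of $H$ in the second slot. Thus $K=\max\{\,|\det[v\,|\,w]|:v,w\text{ vertices of }H\,\}$. The six vertices of $H$ are $\pm u_0,\pm u_1,\pm u_2$, and rotation by $\pi/3$ is a Euclidean isometry that permutes them cyclically while preserving $|\det|$, so it suffices to compute $|\det[u_0\,|\,w]|$ as $w$ ranges over the other five vertices. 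A short computation (e.g. $\det[u_0\,|\,u_1]=1\cdot(\sqrt3/2)-0\cdot(-1/2)=\sqrt3/2$) gives $|\det[u_0\,|\,{-u_0}]|=0$ and $|\det[u_0\,|\,w]|=\sqrt3/2$ for the remaining four. Therefore $K=\sqrt3/2$ and $\omega_{\mathcal H}=(2/\sqrt3)\lambda$.

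There is no substantive obstacle: the argument is elementary, and the one step deserving a word of justification is the reduction of the supremum to pairs of vertices of $H$, which follows from the convexity of $a\mapsto|\det[a\,|\,b]|$ on the compact convex set $H$. As a consistency check, $\sqrt3/2$ is the largest area of a parallelogram with one vertex at the origin and the two neighbouring vertices in $H$; the extremal parallelograms are spanned by pairs of non-antipodal vertices of $H$ and need not themselves lie inside $H$, which is consistent with Definition \ref{meas:def} requiring only $\|a\|,\|b\|\le 1$.
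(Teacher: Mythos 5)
Your proof is correct and follows essentially the same route as the paper's: both compute the normalizing constant $K$ of Definition \ref{meas:def} as the maximal Euclidean area of a parallelogram spanned by two vectors of the hexagon, and find $K=\sqrt{3}/2$ attained at pairs of non-antipodal, non-adjacent vertices. Your convexity argument reducing the supremum to pairs of vertices is in fact a cleaner justification of the extremal step that the paper only asserts.
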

\begin{proof} Let $a,b$ be unit vectors in the Hex norm. Then they lie on the boundary of
the regular unit Hexagon $H$
 center at the origin
and determine a parallelogram $P(a,b)$ with vertices $\{0,a,b,a+b\}$.
Suppose $a$ lies on the edge $e$ of $H$. Then the area of $P(a,b)$ is maximized by taking
 $b$ to be a vertex of $H$ that that is not an endpoint of $\pm e$. The Euclidean area of $P(a,b)$ is then $\sqrt{3}/2$.
 \end{proof}
 
  Our choice of normalization of area has the following consequences in the Hex plane.
A Hex circle of radius $r$ is a Euclidean regular Hexagon
so that the (Euclidean=Hex)  distance from the center to a vertex is $r$. The circumference of this circle is $6r$ and
the p-area  is $3r^2$.

The {\em positive quadrant} is $Q=\{(x,y)\ :\ x,y>0\ \}$. A {\em triangle} in $\RP^2$ is a compact convex subset, $\Delta$, 
bounded by 3 segments of projective lines.
There is a projective transformation
taking the interior of $\Delta$ to $\{[x:y:1]\ :\ x,y>0\ \}$ which may be identified with $Q$. Thus the Hilbert metric
on the interior of $\Delta$ is isometric to $(Q,d_Q)$.

 \begin{lemma}[Proposition 7 in \cite{delaharpe}]\label{isom} The Hilbert metric on $Q$ is isometric to the Hex plane $({\mathcal H},d_{\mathcal H})$.
\end{lemma}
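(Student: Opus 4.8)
The plan is to pass to logarithmic coordinates on $Q$, observe that these conjugate the Hilbert metric to a genuine norm on $\R^2$, and then identify the unit ball of that norm as an affinely regular hexagon.

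First I would define $\phi\colon Q\to\R^2$ by $\phi(x,y)=(\log x,\log y)$, using the identification of $Q$ with $\{[x:y:1]\ :\ x,y>0\}$. For $a,b\in\R$ the diagonal map $(x,y)\mapsto(e^ax,e^by)$ is the restriction to $Q$ of $\diag(e^a,e^b,1)\in\PGL(3,\R)$; this projective transformation preserves $Q$, hence is an isometry of $(Q,d_Q)$, and in $\phi$-coordinates it is translation by $(a,b)$. Therefore the Finsler metric obtained by transporting $d_Q$ through $\phi$ is translation invariant, so it is given by a single norm $N$ on $\R^2$; that is, $(Q,d_Q)$ is isometric to $(\R^2,N)$. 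Since $d\phi$ is the identity at $[1:1:1]=(1,1)$, the norm $N$ is precisely the Hilbert--Finsler norm of $Q$ at the point $(1,1)$.

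Next I would compute $N$. Writing a velocity at $(1,1)$ as $w=(u,v)$, the line $t\mapsto(1+tu,1+tv)$ meets $\partial Q$ in two of the three points cut out by $\{x=0\}$, $\{y=0\}$ and the line at infinity $\{z=0\}$, the relevant two depending on the signs of $u$ and $v$; when one endpoint lies on the line at infinity its contribution $|p-d|^{-1}$ is $0$. Substituting these boundary points into the Finsler formula $F_p(w)=|w|\bigl(|p-a|^{-1}+|p-d|^{-1}\bigr)$ recalled above gives, after a short computation, $N(u,v)=\max(|u|,|v|)$ when $u,v$ have the same sign and $N(u,v)=|u|+|v|$ when they have opposite signs. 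It suffices to treat the two cases $0\le v\le u$ and $u\ge 0\ge v$ explicitly and then invoke the symmetries $w\mapsto -w$ (which merely reverses the defining line) and $(u,v)\mapsto(v,u)$ (induced by the isometry $x\leftrightarrow y$ of $Q$ fixing $(1,1)$). Hence the unit ball $B_N$ is the hexagon with successive vertices $(1,0),(1,1),(0,1),(-1,0),(-1,-1),(0,-1)$.

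Finally I would observe that this hexagon is affinely regular: its vertices $v_1,\dots,v_6$ listed cyclically above satisfy $v_{k-1}+v_{k+1}=v_k$, which is the affine characterization of the regular hexagon, so the linear map carrying $v_1,v_2$ to two consecutive vertices of $H$ maps $B_N$ onto $H$ and is an isometry $(\R^2,N)\to({\mathcal H},d_{\mathcal H})$. Composing with $\phi$ produces the desired isometry $Q\to{\mathcal H}$. The one place needing care is the bookkeeping in the Finsler computation --- identifying, in each sign regime, which pair of boundary lines actually cuts off the segment $\ell_w\cap Q$ through $(1,1)$ --- but by the symmetries above this reduces to the two displayed cases, and the rest is routine.
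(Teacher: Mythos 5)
Your proof is correct and follows essentially the same route as the paper: pass to logarithmic coordinates, use the fact that the positive diagonal group acts by Hilbert isometries and becomes translations, and reduce to identifying the norm on a single tangent space. The only difference is that you actually carry out the infinitesimal computation (finding the unit ball to be the affinely regular hexagon with vertices $(1,0),(1,1),(0,1),(-1,0),(-1,-1),(0,-1)$), whereas the paper writes down the explicit linear change of variables $A(u,v)=(e^{u+v/\sqrt{3}},e^{2v/\sqrt{3}})$ and leaves the check at the origin to the reader; the two are consistent, since your linear map is the inverse of the linear part of $\phi\circ A$.
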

\begin{proof} There is an isometry 
$A:\mathbb (\R^2,\|\cdot\|_{Hex})\rightarrow\mathbb (Q,d_Q)$ given by
 \[A(u,v)=\left(e^{u+\frac{1}{\sqrt{3}}v},e^{\frac{2}{\sqrt{3}}v}\right)=(x,y).\]
This may be checked as follows. The map $A$ conjugates the action of $\R^2$ on itself by
translations to the action of the positive diagonal group on $Q$. Thus it suffices to check
$A$ is infinitesimally an isometry at the origin. \end{proof}

\begin{figure}
	\psfrag{P}{$\color{blue}P$}
	\psfrag{H}{$H$}
	\psfrag{Z}{$v_0+v_1$}
	\psfrag{Y}{$v_0+u_1$}
	\psfrag{W}{$v_1$}
	\psfrag{0}{$0$}
	\psfrag{B}{\color{blue} base $=1$}
	\psfrag{C}{\color{darkgreen} height $=1$}
	\psfrag{L}{\color{purple} base $=1$}
	\psfrag{Q}{$\color{darkgreen}P'$}
	\psfrag{N}{$u_1$}
	\psfrag{S}{$u_0+u_1$}
	\psfrag{T}{$\color{purple}P''$}
	\psfrag{M}{$u_0$}
	\psfrag{V}{$v_0$}
	\includegraphics[scale=.8]{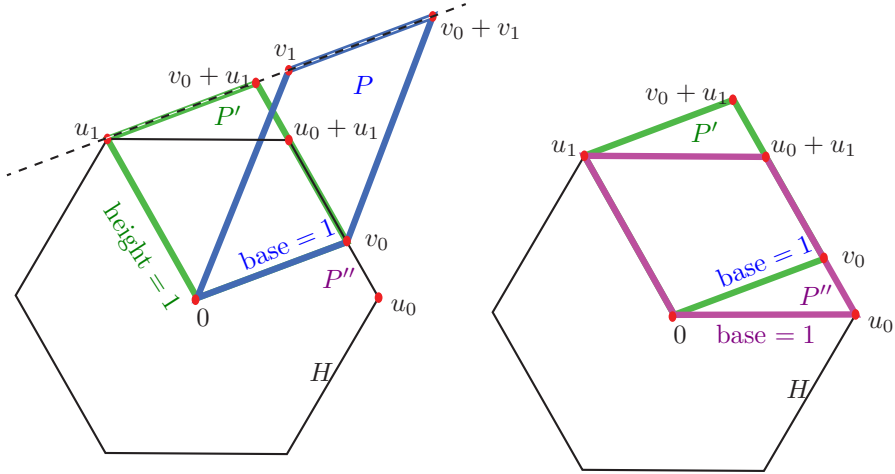}
	\caption{In Hex geometry area = base $\times$ height}\label{parallelogramarea}
\end{figure}

If $\Omega\subset\RP^2$ is a properly convex domain the {\em Hilbert area form} $\omega_{\Omega}$ is the 2-form
given by the   Hilbert metric. 

\begin{lemma}\label{positivequad} The Hilbert area form on the positive quadrant $Q=\{(x,y)\ :\ x,y>0\ \}$ is
$$\omega_{Q}=\frac{dxdy}{xy}$$
\end{lemma}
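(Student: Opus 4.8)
The plan is to transport the Hilbert area form on $Q$ to the Hex plane along the isometry $A$ of Lemma~\ref{isom}, and then read off the answer from a one–line Jacobian computation. First I would record the (purely formal) fact that the p-area form is a pointwise invariant: by Definition~\ref{meas:def} the p-area form of a Finsler surface at a point depends only on the norm on the tangent space at that point, so any Finsler isometry pulls back the p-area form of its target to the p-area form of its source, its derivative being at each point a linear isometry of the relevant tangent norms. Since by Lemma~\ref{isom} the map $A:(\R^2,\|\cdot\|_{Hex})\to(Q,d_Q)$, $A(u,v)=\left(e^{u+v/\sqrt3},\,e^{2v/\sqrt3}\right)=(x,y)$, is a Finsler isometry, this gives $A^{*}\omega_{Q}=\omega_{\mathcal H}$. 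By Lemma~\ref{conversionfactor}, $\omega_{\mathcal H}=(2/\sqrt3)\,du\,dv$, where $du\,dv$ is Lebesgue measure for the standard inner product.

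Next I would compute the Jacobian of $A$: from $x=e^{u+v/\sqrt3}$ and $y=e^{2v/\sqrt3}$ one gets $\partial x/\partial u=x$, $\partial x/\partial v=x/\sqrt3$, $\partial y/\partial u=0$, $\partial y/\partial v=2y/\sqrt3$, hence $\det DA=2xy/\sqrt3$. Therefore $A^{*}\!\left(\dfrac{dx\,dy}{xy}\right)=\dfrac{1}{xy}\cdot\dfrac{2xy}{\sqrt3}\,du\,dv=\dfrac{2}{\sqrt3}\,du\,dv=\omega_{\mathcal H}$. Comparing with the previous paragraph, $A^{*}\omega_{Q}=A^{*}\!\left(\dfrac{dx\,dy}{xy}\right)$, and since $A$ is a diffeomorphism this forces $\omega_{Q}=\dfrac{dx\,dy}{xy}$.

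As an alternative route that avoids Lemmas~\ref{isom} and \ref{conversionfactor}, I would compute the Hilbert unit ball in $T_{(1,1)}Q$ directly — using the Finsler formula $ds=(|x-a|^{-1}+|x-d|^{-1})dx$ one finds it is the centrally symmetric hexagon with vertices $\pm(1,0),\pm(0,1),\pm(1,1)$ — observe that for any centrally symmetric hexagon the largest parallelogram with unit sides has Euclidean area equal to one third of the hexagon's area (an affine–invariant ratio, checked once on a regular hexagon), conclude that the constant $K$ of Definition~\ref{meas:def} is $1$ at $(1,1)$ so $\omega_Q=dx\,dy$ there, and finally invoke the transitive action of the positive diagonal group on $Q$ by Hilbert isometries together with the fact that $dx\,dy/(xy)$ is, up to scale, the unique invariant $2$-form.

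The hard part is, honestly, not hard: the statement is the combination of two already–established lemmas with a trivial Jacobian. The one point that genuinely needs to be spelled out is the functoriality in the first paragraph — that a Finsler isometry carries p-area form to p-area form — which is immediate from Definition~\ref{meas:def}; a secondary subtlety, already handled inside Lemma~\ref{isom}, is that $A$ is an isometry everywhere and not merely infinitesimally at the origin, which follows from its equivariance under the diagonal group.
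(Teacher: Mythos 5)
Your main argument is correct and is essentially the paper's own proof: both transport the p-area form along the isometry $A$ of Lemma \ref{isom}, compute the Jacobian $\det DA=\frac{2}{\sqrt{3}}xy$, and combine it with $\omega_{\mathcal H}=(2/\sqrt{3})\lambda$ from Lemma \ref{conversionfactor} to get $\omega_Q=dx\,dy/(xy)$. Your explicit remark that a Finsler isometry pulls back the p-area form (which the paper leaves implicit) and your sketched alternative via the tangent hexagon at $(1,1)$ are fine additions but do not change the route.
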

\begin{proof}  The  isometry $A$ in (\ref{isom})  infinitesimally multiplies Lebesgue measure $\lambda$ on $\R^2$ by
$$
J=\begin{vmatrix}
\frac{\partial x}{\partial u} & \frac{\partial x}{\partial v} \\ 
\frac{\partial y}{\partial u} & \frac{\partial y}{\partial v}
\end{vmatrix} =\begin{vmatrix}
e^{u+\frac{1}{\sqrt{3}}v} & \frac{1}{\sqrt{3}}e^{u+\frac{1}{\sqrt{3}}v} \\ 
0 & \frac{2}{\sqrt{3}}e^{\frac{2}{\sqrt{3}}v}
\end{vmatrix} =\frac{2}{\sqrt{3}}\left(e^{u+\frac{1}{\sqrt{3}}v}\right)\left(e^{\frac{2}{\sqrt{3}}v}\right)
=\frac{2}{\sqrt{3}}xy.
$$
Since $\omega_{\mathcal H}=(2/\sqrt{3})\lambda$  it follows that $$\omega_Q=\frac{\omega_{\mathcal H}}{\frac{2}{\sqrt{3}}xy}=\frac{\lambda}{xy}=\frac{dxdy}{xy}$$ \end{proof}

\begin{lemma}\label{Hexparallelogram} The p-area of every parallelogram  in the Hex plane
is base $\times$ height.
\end{lemma}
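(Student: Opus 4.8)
The plan is to reduce ``p-area $=$ base $\times$ height'' to one identity relating the Hex norm to its dual norm, and then to verify that identity by a short computation with the three vertex directions $u_0,u_1,u_2$.

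First I would fix a nondegenerate parallelogram $P$ with vertices $0,a,b,a+b$ and take the side $[0,a]$ as its base, so the base is $\|a\|_{\mathcal H}$ and the height $h$ is the Hex distance between the two parallel lines $\R a$ and $b+\R a$ containing the chosen pair of sides. Writing $\det(a,v)=a_1v_2-a_2v_1$, let $f_a$ be the linear functional $f_a(v)=\det(a,v)$ and $\|f_a\|_{*}=\sup\{|f_a(v)|:\|v\|_{\mathcal H}\le 1\}$ its dual norm. Since $b+\R a=\{v:f_a(v)=\det(a,b)\}$, the elementary fact $\inf\{\|v\|_{\mathcal H}:f_a(v)=c\}=|c|/\|f_a\|_{*}$ gives $h=|\det(a,b)|/\|f_a\|_{*}$. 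As $|\det(a,b)|$ is the Euclidean area of $P$, Lemma \ref{conversionfactor} says the p-area of $P$ is $(2/\sqrt3)\,|\det(a,b)|$. So the lemma is equivalent to the single identity $\|f_a\|_{*}=\tfrac{\sqrt3}{2}\,\|a\|_{\mathcal H}$ for all $a$; that is, $a\mapsto f_a$ is a $(\sqrt3/2)$-similarity from the Hex norm to its dual.

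To prove that identity I would use $\|f_a\|_{*}=\max_v|\det(a,v)|$ over the six vertices $\pm u_0,\pm u_1,\pm u_2$ of $H$ (the function $|f_a|$ attains its maximum over the polygon $H$ at a vertex). Writing $a=\alpha_0u_0+\alpha_1u_1$ and using $u_0+u_1+u_2=0$ together with $\det(u_0,u_1)=\det(u_1,u_2)=\det(u_2,u_0)=\sqrt3/2$, one gets $|\det(a,u_0)|=\tfrac{\sqrt3}{2}|\alpha_1|$, $|\det(a,u_1)|=\tfrac{\sqrt3}{2}|\alpha_0|$ and $|\det(a,u_2)|=\tfrac{\sqrt3}{2}|\alpha_0-\alpha_1|$, so $\|f_a\|_{*}=\tfrac{\sqrt3}{2}\max(|\alpha_0|,|\alpha_1|,|\alpha_0-\alpha_1|)$. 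In the same coordinates the vertices of $H$ are $(\pm1,0),(0,\pm1),\pm(1,1)$, and one checks $H=\{(\alpha_0,\alpha_1):|\alpha_0|\le1,\ |\alpha_1|\le1,\ |\alpha_0-\alpha_1|\le1\}$, so $\|a\|_{\mathcal H}=\max(|\alpha_0|,|\alpha_1|,|\alpha_0-\alpha_1|)$; comparing the two expressions proves the identity. Conceptually this is just the self-duality of the regular hexagon: the polar dual of $H$ is $2/\sqrt3$ times a $30^{\circ}$-rotate of $H$, and $H$ is invariant under rotation by $60^{\circ}$.

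Putting this together, $\text{base}\times\text{height}=\|a\|_{\mathcal H}\cdot|\det(a,b)|/\|f_a\|_{*}=(2/\sqrt3)\,|\det(a,b)|$, which is the p-area of $P$; as a check, the answer depends only on $|\det(a,b)|$, so it is independent of which pair of opposite sides is taken as the base. The only nontrivial point is the similarity identity $\|f_a\|_{*}=(\sqrt3/2)\|a\|_{\mathcal H}$; everything else is bookkeeping. It is a finite computation, but it is where the special geometry of the regular hexagon is used (the analogous statement fails for a general norm, which is what singles out p-area, rather than Busemann or Holmes--Thompson volume, here). An alternative route, matching Figure \ref{parallelogramarea}, is purely combinatorial: rescale so the base is $1$, use a symmetry of $H$ to move the base side onto an edge $[0,\,u_0+su_1]$ of $H$, note the opposite side then lies on the line through $h\,u_1$ parallel to the base (since the Hex distance from $u_1$ to the base line is $1$), and apply two area-preserving shears — one parallel to the base, one parallel to $u_1$ — to carry $P$ onto the parallelogram spanned by $u_0$ and $h\,u_1$, whose p-area is $(2/\sqrt3)(\sqrt3/2)h=h$; undoing the rescaling gives the general case.
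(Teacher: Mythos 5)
Your proof is correct, and it takes a genuinely different route from the paper's. The paper argues synthetically: it first uses an affine map to normalize so that base and height are both $1$, and a rotation by $\pi/3$ to put the base vector on the edge of $H$ between $u_0$ and $u_0+u_1$, and then applies two shears --- one parallel to the base, one parallel to $u_1$ --- each preserving base, height and p-area, carrying the parallelogram onto the one spanned by $u_0$ and $u_1$, whose p-area is $1$ by definition of the parallelogram measure. You instead compute both sides in closed form: by Lemma \ref{conversionfactor} the p-area is $(2/\sqrt{3})\,|\det(a,b)|$, and by the dual-norm formula for the distance between parallel lines the height is $|\det(a,b)|/\|f_a\|_{*}$, so the lemma collapses to the single identity $\|f_a\|_{*}=(\sqrt{3}/2)\|a\|_{\mathcal H}$, which you verify correctly by evaluating $\det(a,\cdot)$ at the six vertices of $H$ and comparing with the gauge $\max(|\alpha_0|,|\alpha_1|,|\alpha_0-\alpha_1|)$ of $H$ in $(u_0,u_1)$-coordinates. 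The special geometry of the regular hexagon enters in both proofs --- in the paper it is what makes the second shear preserve base and height, in yours it is the self-duality of $H$ up to rotation and the factor $2/\sqrt{3}$ --- but your version isolates this as one explicit identity, yields the formula for the p-area in terms of $|\det(a,b)|$ and the independence of the choice of base as byproducts, and avoids the normalization and case placement of the base vector, at the cost of replacing the paper's picture with a short coordinate computation. (Your closing sketch of the shear alternative is essentially the paper's own proof.)
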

 \begin{proof}   Affine maps of the plane multiply Euclidean area (and hence p-area) by the determinant of the linear part.
 Hence it suffices to prove the result in the special case of a parallelogram $P$ with vertices $0,v_0,v_1,v_0+v_1$ 
 for which the length of the base is $\|v_0\|_{Hex}=1$  and the height is also one. Moreover a rotation through an angle of $\pi/3$
 is an isometry of Hex. Thus we may assume $v_0$
 is  on the side of $H$ between $u_0$ and $u_0+u_1$.
 
 Refer to Figure (\ref{parallelogramarea}). We transform $P$ to $P'$ to $P''$ and show that each of these parallelograms
 has the same p-area.
 A shear parallel to the base of $P$ preserves base, height and p-area. 
 Shear $P$ parallel to $v_0$ sending $v_1$ to $u_1$ to give a parallelogram $P'$ with vertices
 $0,v_0,u_1,v_0+u_1$. Now shear $P'$ parallel to the $u_1$-direction to get a parallelogram  $P''$  with vertices $0,u_0,u_1,u_0+u_1$.
 This shear preserves base and height because of the special properties of $H$. 
 The area of $P''$ is $1$.
   \end{proof}


\section{Ideal triangles in Hex geometry}

Suppose $\Omega\subset\RP^2$ is an open properly convex set. 
 If $T$ is a triangle with vertices in $\partial \Omega$ then $T\cap\Omega$ is called
an {\em ideal triangle} in $\Omega$. It is {\em proper} if $T\cap\partial\Omega$ consists
of only the vertices of $T$.  We will only be concerned with proper ideal triangles, and will
henceforth omit the term {\em proper}.

If $\Delta=\{[x_0v_0+x_1v_1+x_2v_2]\ :\ x_i>0\ \}$ is the interior of
 triangle in $\RP^2$ there is an isometry $\phi:(\Delta,d_{\Delta})\longrightarrow ({\mathcal H},d_{\mathcal H})$
given by
$$\phi[x_0v_0+x_1v_1+x_2v_2]=(\log x_0) u_0+(\log x_1)u_1+(\log x_2)u_2$$
Suppose $T$ is an ideal triangle in $\Delta$. We refer to $\phi(T)$ as an {\em ideal triangle} in the Hex plane.
Then $T\subset\Delta$ together with an ordering of the vertices of $\Delta$ determines a {\em shape parameter} $t=t(T,\Delta)\in{\mathbb R}$ defined as follows, see Figure (\ref{inscribetriangle}). If the vertices of $T$ are $[w_0=v_0+av_1],[w_1=v_1+bv_2],[w_2=v_2+cv_0]$ with $a,b,c>0$ then $t=abc$.
This depends only on the cyclic
ordering of the vertices. Changing this ordering replaces $t$ by $1/t$. Observe that when $a=b=1$ then $|\log t|$ is the Hilbert distance
in $(v_1,v_2)$ between $[w_2]$ and the midpoint $[v_1+v_2]$.

The group $PGL(\Omega)$ is the subgroup  of $PGL(3,{\mathbb R})$ which 
preserves $\Omega$. Using the basis $v_0,v_1,v_2$ of ${\mathbb R}^3$ the identity component  $PGL_0(\Delta)$ consists of positive diagonal matrices.
This group acts transitively on the interior of $\Delta$. There is a unique element $\tau\in PGL_0(\Delta)$ which takes  two of the vertices of $T$ to
$[u_1+u_2],[u_1+u_3]$. The remaining vertex is taken to $[u_2+t u_3]$, see Figure (\ref{inscribetriangle}). The {\em regular} ideal triangle is given by
$t=1$. It has maximal isometry group: dihedral of order $6$.
\begin{figure}
	\psfrag{a}{$[1:0:0]$}
	\psfrag{b}{$[0:1:0]$}
	\psfrag{c}{$[0:0:1]$}
	\psfrag{e}{$[1:a:0]$}
	\psfrag{f}{$[0:1:b]$}
	\psfrag{g}{$[c:0:1]$}
	\psfrag{h}{$t=abc$}
	\psfrag{k}{$\Delta$}
	\psfrag{l}{$[1:1:0]$}
	\psfrag{m}{$[1:0:1]$}
	\psfrag{n}{$[0:1:t]$}
	\psfrag{w}{$[0:1:1]$}
	\psfrag{v}{$\tau = \log t$}
		\psfrag{T}{$\tau$}
	\psfrag{G}{$\color{red}T$}
	\includegraphics[scale=0.6]{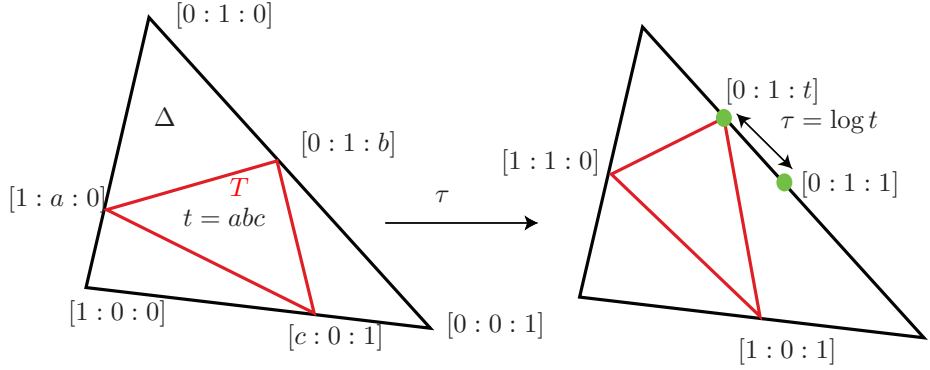}
	\caption{The shape parameter $t>0$ and triangle invariant $\tau=\log t$}\label{inscribetriangle}
\end{figure}

\begin{proposition}\label{shapeclass} Isometry classes of ideal Hex triangle  are 1-1 correspondence with shape parameters $t\in[1,\infty)$.
\end{proposition}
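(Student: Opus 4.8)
The plan is to reduce every ideal Hex triangle to a one‑parameter normal form and to show that its parameter, read modulo $t\leftrightarrow 1/t$, is a complete isometry invariant. Write $\Delta=\{[x_0:x_1:x_2]:x_i\ge 0\}$, let $\phi\colon(\Int\Delta,d_\Delta)\to(\mathcal H,d_{\mathcal H})$ be the isometry defined above, and for $s>0$ let $T_s\subset\Delta$ be the proper ideal triangle with vertices $[1:1:0],[1:0:1],[0:1:s]$. An ideal Hex triangle is $\phi(T)$ for some proper ideal triangle $T\subset\Delta$, and properness forces the three vertices of $T$ to lie one in the relative interior of each edge of $\Delta$, since otherwise an edge of $T$ would lie inside $\partial\Delta$. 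Hence $T$ has vertices $[1:a:0],[0:1:b],[c:0:1]$ with $a,b,c>0$ and $t(T,\Delta)=abc$, and the unique $d\in\PGL_0(\Delta)$ carrying $[1:a:0]\mapsto[1:1:0]$ and $[c:0:1]\mapsto[1:0:1]$ automatically sends $[0:1:b]\mapsto[0:1:abc]$; since $\PGL_0(\Delta)$ acts isometrically this shows $\phi(T)$ is isometric to $\phi(T_{abc})$. The coordinate transposition $g_0\colon[x_0:x_1:x_2]\mapsto[x_0:x_2:x_1]$ lies in $\PGL(\Delta)$, hence is an isometry, and $g_0(T_t)=T_{1/t}$, so I may assume $t\ge 1$. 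This yields surjectivity onto $[1,\infty)$, and it remains to prove injectivity: $\phi(T_s)\cong\phi(T_t)$ with $s,t\ge 1$ implies $s=t$.

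For this I will use the structure of $\Isom(\mathcal H)$. If $g$ is a Hex isometry with $g(\phi(T_s))=\phi(T_t)$, then $h:=\phi^{-1}g\phi$ is an isometry of $(\Int\Delta,d_\Delta)$ with $h(T_s\cap\Int\Delta)=T_t\cap\Int\Delta$. By the Mazur--Ulam theorem every isometry of the normed plane $\mathcal H$ is affine, so $\Isom(\mathcal H)$ is the translation group extended by the linear symmetries of the regular hexagon $H$, a dihedral group of order $12$. Pulling back through $\phi$, the translations become $\PGL_0(\Delta)$, the six hexagon symmetries that permute $\{u_0,u_1,u_2\}$ become the coordinate‑permutation subgroup $S_3\subset\PGL(\Delta)$, and the other six are these composed with $-\mathrm{id}$, which corresponds to the involution $c\colon[x_0:x_1:x_2]\mapsto[x_0^{-1}:x_1^{-1}:x_2^{-1}]$ of $\Int\Delta$. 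Thus $h=p$ or $h=c\circ p$ for some $p\in\PGL(\Delta)=\PGL_0(\Delta)\rtimes S_3$.

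The crucial point is that $c$ cannot occur. If $h=c\circ p$, then $p(T_s)$ is again a proper ideal triangle, so after relabelling it has a vertex $[p:q:0]$ with $p,q>0$ in the relative interior of the edge $\{x_2=0\}$; taking $x_n=[a_n:b_n:c_n]\in p(T_s)\cap\Int\Delta$ with $a_n\to p$, $b_n\to q$, $c_n\to 0^+$ gives $c(x_n)=[c_n/a_n:c_n/b_n:1]\to[0:0:1]$, so the closure of $h(T_s\cap\Int\Delta)$ contains a corner of $\Delta$. But that closure equals $T_t$, which meets $\partial\Delta$ only in its three vertices, none of which is a corner — a contradiction. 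Hence $h\in\PGL(\Delta)$, i.e. a diagonal matrix followed by a coordinate permutation $\sigma$; the diagonal factor does not change the shape parameter, and $\sigma$ carries $t(T_s,\Delta)=s$ to $s$ or $1/s$ according as it preserves or reverses the cyclic order of the coordinates. So $t=t(T_t,\Delta)\in\{s,1/s\}$, and with $s,t\ge 1$ this forces $t=s$, completing the argument.

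I expect the injectivity step to be the main obstacle, and within it the observation that the unique non‑projective isometry $c$ of $(\Int\Delta,d_\Delta)$ pushes the vertices of a proper ideal triangle onto the corners of $\Delta$ and so cannot relate two of them; once this is in hand the rest is bookkeeping with diagonal matrices and coordinate permutations. A less structural alternative to injectivity is available: by Lemma \ref{positivequad} the Hilbert area form on the positive quadrant is $dx\,dy/(xy)$, and integrating it over the image of $T_t$ there gives a value that is strictly increasing in $t$ for $t\ge 1$, which separates the isometry classes directly without classifying $\Isom(\mathcal H)$.
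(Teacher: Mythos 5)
Your argument is correct, and the first thing to say is that the paper states this proposition with no proof at all, so there is nothing to compare against line by line; what you supply fills a genuine gap. Your route is: reduce to the normal form $T_t$ using the positive diagonal group (surjectivity onto $[1,\infty)$ via the transposition $t\leftrightarrow 1/t$), then get injectivity from the structure of $\Isom(\mathcal H)$ --- Mazur--Ulam makes every surjective Hex isometry affine, the linear part is the order-$12$ symmetry group of the hexagon, and under $\phi$ this corresponds to $\PGL(\Delta)=\PGL_0(\Delta)\rtimes S_3$ extended by the inversion $c([x_0:x_1:x_2])=[x_0^{-1}:x_1^{-1}:x_2^{-1}]$. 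The one genuinely nontrivial point is your observation that $c$ pushes a proper ideal triangle to a set whose closure contains a corner of $\Delta$, so $c$ can never relate two proper ideal triangles; I checked this and the subsequent bookkeeping (a coordinate permutation carries $t=abc$ to $t$ or $1/t$ according to cyclic orientation, a diagonal matrix fixes it), and both are right. One caveat worth recording: you silently read ``isometry class'' as ``orbit under $\Isom(\mathcal H)$,'' starting from a global Hex isometry $g$ with $g(\phi(T_s))=\phi(T_t)$. If the proposition instead means isometry of the triangles as metric subspaces, you would additionally have to show that such a map extends to the ambient plane, and Mazur--Ulam does not apply to non-surjective isometries of subsets; the ambient reading is surely the intended one and is all the paper ever uses. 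Your proposed fallback for injectivity --- that the area $B(t)=(\pi^2+(\log t)^2)/2$ is strictly increasing for $t\ge 1$ and is an isometry invariant --- is also valid, but it borrows the later computation, so the structural argument is the better one to keep.
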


\if0
\begin{figure}
	\psfrag{a}{$t=1$}
	\psfrag{b}{$t=100$}
	\includegraphics[scale=0.5]{twohextriangles}
	\caption{Ideal triangles in the Hex plane}\label{twohextriangles}
\end{figure}
\fi

\begin{lemma} The Hilbert area of an ideal triangle in the Hex plane with shape parameter $t$ is
$$B(t)=\int_{1}^{\infty}\frac{1}{s}\log\left(\frac{(st+1)(s+t)}{t(s-1)^2}\right)\ ds$$
\end{lemma}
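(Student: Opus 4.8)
The plan is to compute the Hilbert area of an ideal triangle $T$ in the Hex plane by transporting the problem to the positive quadrant $Q$ via the isometry of Lemma~\ref{isom}, where by Lemma~\ref{positivequad} the area form is the explicit expression $dxdy/(xy)$. First I would use the projective freedom afforded by $PGL_0(\Delta)$ to normalize the three vertices of $T$ as in Figure~(\ref{inscribetriangle}): send two vertices to $[1:1:0]$ and $[1:0:1]$ and the third to $[0:1:t]$ in homogeneous coordinates on the model triangle $\Delta=\{[x_0:x_1:x_2]:x_i>0\}$. The image $\phi(T)$ in the Hex plane (equivalently, the corresponding region in $Q$ after applying the isometry $A$) is then the projective triangle spanned by these three points intersected with $Q$; its boundary consists of three line segments, each lying on a projective line through two of the vertices.

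The main computation is then to set up the double integral $\int_{\phi(T)} dxdy/(xy)$ over this region. The key step is to identify the three bounding lines explicitly in affine coordinates $[x:y:1]\leftrightarrow(x,y)\in Q$ and to choose the order of integration that makes the inner integral elementary. The factor $1/(xy)$ is exactly what is needed: integrating $dx/x$ against the two lines bounding the triangle in the $x$-direction produces a logarithm of a ratio of the two affine $x$-coordinates of the boundary, and this ratio, when the bounding lines are written out, collapses to an expression like $\log\bigl((st+1)(s+t)/(t(s-1)^2)\bigr)$ after the substitution $s = y$ (or a monomial reparametrization of one coordinate). The remaining integral $\int_1^\infty (1/s)\log(\cdots)\,ds$ is then precisely the claimed $B(t)$. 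One must check that the region is swept out exactly for $s\in(1,\infty)$, which follows from locating where the three lines cross the relevant coordinate lines; the lower limit $s=1$ comes from the vertex $[1:1:0]$ and the upper limit from the vertex $[0:1:t]$ being at infinity in the appropriate chart.

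I expect the main obstacle to be purely bookkeeping: correctly writing the equations of the three projective lines bounding $T$ in the chosen affine chart and verifying that the inner integration limits, after the logarithm is taken, assemble into exactly the stated integrand with no stray factors of $t$ or sign errors. A secondary subtlety is the choice of affine chart and the accompanying monomial change of variables (coming from the exponential form of the isometry $A$ in Lemma~\ref{isom}): one wants to integrate in coordinates where both $Q$ and the area form $dxdy/(xy)$ have the simple stated form, so the substitution must be made consistently, and one should double-check that the Jacobian contributions are already absorbed into the $1/(xy)$ factor rather than being counted twice. Once the chart is fixed and the three lines are written down, the rest is a routine evaluation of an inner integral of $dx/x$ followed by renaming the outer variable to $s$.
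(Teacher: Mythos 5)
Your strategy is sound and would prove the lemma, but it is organized differently from the paper's argument, so it is worth comparing the two. The paper does \emph{not} integrate the $2$-form $dxdy/(xy)$ directly. Instead it normalizes $T$ in $Q$ so that one side is $x+y=1$ and the other two sides are the parallel rays $y-1=tx$ and $y=t(x-1)$, foliates $T$ by the Hilbert-geodesic segments $\gamma(s)=T\cap\{x+y=s\}$, computes the Hilbert length $\ell(s)$ of each leaf by projecting to the $x$-axis and evaluating a cross-ratio, notes that consecutive leaves are at Hilbert distance $ds/s$, and then invokes Lemma~\ref{Hexparallelogram} (p-area of a parallelogram in the Hex plane equals base times height) to conclude $\area_\Delta(T)=\int_1^\infty \ell(s)\,ds/s$. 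Your route replaces the geometric ingredients (the Hex parallelogram lemma and the cross-ratio computation) by Lemma~\ref{positivequad} plus Fubini; this is legitimate since the paper defines Hilbert area of a measurable set as the integral of the area form, and the two computations agree: with $s=x+y$ as the outer variable the inner integral is $\int_a^b \frac{dx}{x(s-x)}=\frac{1}{s}\log\frac{b(s-a)}{a(s-b)}$, which is exactly $\ell(s)/s$ with $a=\frac{s-1}{1+t}$, $b=\frac{s+t}{1+t}$, reproducing the stated integrand. Two points in your sketch need tightening. First, the outer variable must be $s=x+y$ (the foliation by lines parallel to the finite side), not $s=y$: slicing by horizontal lines splits the region into two cases at the vertex heights and does not produce the integrand in the stated form. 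Second, the inner integral is not of $dx/x$ alone but of $dx/(x(s-x))$ after the change of variables $(x,y)\mapsto(x,s)$ (unit Jacobian); the partial-fraction decomposition $\frac{1}{x(s-x)}=\frac{1}{s}\bigl(\frac{1}{x}+\frac{1}{s-x}\bigr)$ is precisely where the crucial prefactor $\frac{1}{s}$ in $B(t)$ comes from, so the $1/y$ factor cannot be dropped. With those corrections your normalization and limits $s\in(1,\infty)$ work out as you anticipate, the lower limit coming from the finite side and the upper limit from the ideal vertex at infinity.
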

\begin{proof} By (\ref{isom}) the ideal triangle is isometric to an ideal triangle $T$ in $Q$. By means
of a projective transformation preserving $Q$  we can arrange that one side of $T$ is given by $x+y=1$ and the other two sides
are then the parallel rays in $Q$ given by $y-1=tx$ and $y=t(x-1)$.  Refer to Figure \ref{domain1}.
It is easy to check that $t$ is the shape parameter. 

For $s\ge 1$ define $\alpha(s),\beta(s)$ to be the points of intersection of the line $x+y=s$ with the sides
$y-1=tx$ and $y=tx+1$ of $T$. Let $\gamma(s)$ be the line segment with these endpoints. 
For $s\ge 1$ these lines
foliate $T$.
For $s>1$ define $$\ell(s)=d_Q(\alpha(s),\beta(s))$$ to be the Hilbert length of $\gamma(s)$.
The distance between $\gamma(s)$ and $\gamma(s+ds)$ is $ds/s$. This is easily seen by projecting onto the $x$-axis along the direction $x+y=0$.
It follows from (\ref{Hexparallelogram}) that the Hilbert area of the infinitesimal parallelogram shown is $\ell(s)ds/s$, thus
$$area_{\Delta}(T )=\int_1^{\infty}\ell(s)\frac{ds}{s}$$
Next we compute
$$\ell(s)=\log\left(\frac{(s+t)(st+1)}{t(s-1)^2}\right)$$
This may be done as follows. Projection of one line onto another preserves Hilbert distance.
Let $\pi$ be vertical projection of the line segment $x+y=s$ in $Q$ onto the $x$-axis. The image
of this segment is $[0,s]$.
Then $$\pi(\alpha(s))=\frac{s-1}{t+1},\qquad\pi(\beta(s))=\frac{s+t}{t+1}$$
Then $\ell(s)=d_Q(\alpha(s),\beta(s)) =d_{[0,s]}(\alpha(s),\beta(s))=|\log CR(0,\alpha(s),\beta(s),s)|$. 

\end{proof}

Next we calculate this integral. This implies Proposition \ref{triangleprop}.
\begin{lemma} $$B(t)=\frac{\pi^2+\left(\log t\right)^2}{2}$$
\end{lemma}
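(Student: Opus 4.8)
The plan is to evaluate $B(t)$ in closed form and then recognise the answer. First I would substitute $s=1/x$, which sends $s\in(1,\infty)$ to $x\in(0,1)$ and $ds/s$ to $-dx/x$; a direct computation gives
$$\frac{(st+1)(s+t)}{t(s-1)^2}=\frac{(t+x)(1+tx)}{t(1-x)^2}=\frac{(1+x/t)(1+tx)}{(1-x)^2},$$
so that
$$B(t)=\int_0^1\frac1x\Bigl(\log(1+x/t)+\log(1+tx)-2\log(1-x)\Bigr)\,dx.$$
Near $x=0$ each of the three terms in the bracket is $O(x)$, so the integrand extends continuously across $x=0$; near $x=1$ only $\log(1-x)$ is unbounded and it is integrable. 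Hence the three summands may be integrated separately.

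Next I would use the standard evaluations $\int_0^1\frac{\log(1+ax)}{x}\,dx=-\mathrm{Li}_2(-a)$ for $a>0$ and $\int_0^1\frac{\log(1-x)}{x}\,dx=-\mathrm{Li}_2(1)=-\pi^2/6$, which give
$$B(t)=-\mathrm{Li}_2(-1/t)-\mathrm{Li}_2(-t)+\frac{\pi^2}{3}.$$
Applying the dilogarithm inversion identity $\mathrm{Li}_2(-t)+\mathrm{Li}_2(-1/t)=-\pi^2/6-\tfrac12(\log t)^2$, valid for all $t>0$, then yields $B(t)=\pi^2/6+\tfrac12(\log t)^2+\pi^2/3=(\pi^2+(\log t)^2)/2$, which is the claim. (Note this is symmetric under $t\mapsto 1/t$, consistent with the effect of reversing the cyclic order of the vertices.)

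The one point that needs care --- the main obstacle --- is the legitimacy of splitting the logarithm: the naive antiderivatives of $\log(st+1)/s$ and $\log(s+t)/s$ diverge at $s=\infty$, so one must first pass to the interval $(0,1)$, where each summand genuinely converges, before separating terms; one also needs the correct real branch of $\mathrm{Li}_2$ at negative arguments when invoking the inversion identity. A way to bypass dilogarithms entirely is to differentiate under the integral sign: from the $(0,1)$-form one gets $B'(t)=\int_0^1\bigl(\frac{1}{1+tx}-\frac{1}{t(t+x)}\bigr)\,dx=\frac1t\bigl(\log(1+t)-\log(1+1/t)\bigr)=\frac{\log t}{t}$, so $B(t)=\tfrac12(\log t)^2+C$; evaluating at $t=1$ using $\int_0^1\frac{\log(1+x)}{x}\,dx=\pi^2/12$ and $\int_0^1\frac{\log(1-x)}{x}\,dx=-\pi^2/6$ gives $C=B(1)=\pi^2/2$.
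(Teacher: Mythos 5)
Your proof is correct, and it reaches the result by a genuinely different route from the paper. The paper differentiates under the integral sign on $(1,\infty)$ to get $B'(t)=t^{-1}\log t$, and then evaluates the constant $B(1)=\pi^2/2$ by converting the integral to $\int_{-\infty}^{\infty}2x\,dx/(e^x-e^{-x})$ and computing a residue at $z=i\pi$ over a rectangular contour. You instead substitute $s=1/x$ to land on $(0,1)$, where the logarithm splits into three individually convergent pieces, and then evaluate everything in closed form via $-\mathrm{Li}_2(-1/t)-\mathrm{Li}_2(-t)+\pi^2/3$ together with the dilogarithm inversion identity; your fallback version (differentiate the $(0,1)$-form, then evaluate $B(1)$ from $\mathrm{Li}_2(\pm1)$) is essentially the paper's strategy with the contour integral replaced by citing $\zeta(2)=\pi^2/6$. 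Your observation about why the split is illegitimate before the substitution is exactly the right point of care, and the $(0,1)$-form actually makes differentiation under the integral easier to justify than the paper's version, since after differentiating in $t$ the integrand is bounded on $[0,1]$ uniformly for $t$ in compact subsets of $(0,\infty)$. The trade-off: the paper's argument is self-contained (it effectively re-proves the Basel identity by residues), while yours is shorter but leans on standard dilogarithm facts --- in particular the inversion formula $\mathrm{Li}_2(-t)+\mathrm{Li}_2(-1/t)=-\pi^2/6-\tfrac12(\log t)^2$ with the correct real branch at negative arguments, which you rightly flag. Your closing remark that the answer is symmetric under $t\mapsto 1/t$ is a nice sanity check that the paper does not make explicit at this point.
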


\begin{proof} 
$$\frac{dB}{dt}
=\frac{d}{dt}\left(\int_{1}^{\infty}\frac{1}{s}\log\left(\frac{(st+1)(s+t)}{t(s-1)^2}\right)\ ds\right)$$
Taking the derivative inside the integral gives
\begin{align*}
\int_{1}^{\infty}\frac{1}{s}\frac{d}{dt}\left(\log\left(\frac{(st+1)(s+t)}{t(s-1)^2}\right)\right)\ ds 
& =
\int_{1}^{\infty}\frac{1}{s}\left(\frac{s}{1+s t}+\frac{1}{s+t}-\frac{1}{t}\right)\ ds \\
& = \int_{1}^{\infty}\left(\frac{1}{1+ st}-\frac{1}{t(s+t)}\right)\ ds \\
& = \left[\frac{1}{t}\left(\log(1 + st)-\log (s+t)\right)\right]_{s=1}^{s=\infty} \\
& = \left[t^{-1}\log\left(\frac{1 + st}{s+t}\right)\right]_{s=1}^{s=\infty}\\
& = t^{-1}\log t
\end{align*}
Thus $$\frac{dB}{dt}=t^{-1}\log t$$
Integrating gives $B(t)=(1/2)(\log t)^2 + C$. The next lemma shows $C=\pi^2/2$.
\end{proof}

\begin{figure}
	\psfrag{a}{$y=t(x-1)$}
	\psfrag{b}{$y-1=tx$}
	\psfrag{c}{$x+y=1$} 
	\psfrag{e}{$1$}
	\psfrag{f}{$s$}
	\psfrag{g}{$s+ds$}
	\psfrag{d}{$ds/s$}
	\psfrag{u}{$\alpha(s)$}
	\psfrag{v}{$\beta(s)$}
	\psfrag{W}{$\ell(s)$}
	\psfrag{Q}{$\Delta=Q$}
	\psfrag{T}{$T$}
		\psfrag{r}{$t=1$}
	\psfrag{s}{$t=100$}
	\includegraphics[scale=0.7]{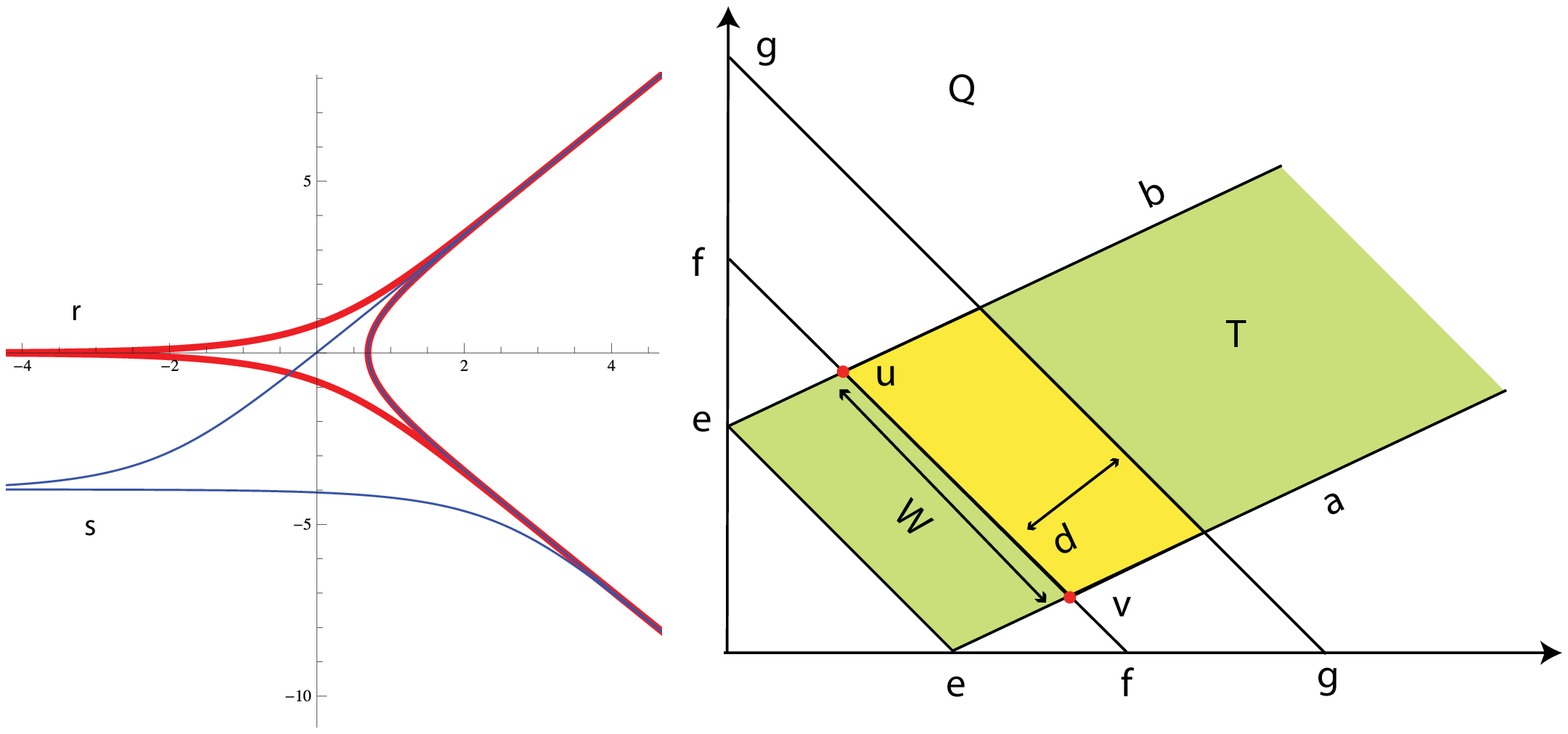}
	\caption{Ideal triangles in the Hex plane \qquad An ideal triangle in the positive quadrant.}\label{domain1}
\end{figure}

\begin{lemma} $$B(1) = \frac{\pi^2}{2}$$
\end{lemma}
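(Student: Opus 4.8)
The plan is to evaluate the integral defining $B$ at $t=1$ in closed form. Setting $t=1$ collapses the argument of the logarithm to $(s+1)^2/(s-1)^2$, so
$$B(1)=\int_{1}^{\infty}\frac{1}{s}\log\left(\frac{(s+1)^2}{(s-1)^2}\right)ds=2\int_{1}^{\infty}\frac{1}{s}\log\left(\frac{s+1}{s-1}\right)ds.$$
This improper integral converges: near $s=1$ the integrand is $O\!\left(\log\frac{1}{s-1}\right)$, which is integrable, and as $s\to\infty$ it is $O(s^{-2})$. First I would make the substitution $s=1/u$, under which $ds=-u^{-2}\,du$ and $s^{-1}\log\frac{s+1}{s-1}=u\log\frac{1+u}{1-u}$, turning the integral over $(1,\infty)$ into one over $(0,1)$:
$$B(1)=2\int_{0}^{1}\frac{1}{u}\log\left(\frac{1+u}{1-u}\right)du.$$

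Next I would expand the logarithm in its power series $\log\frac{1+u}{1-u}=2\sum_{k\ge 0}u^{2k+1}/(2k+1)$, valid for $0<u<1$. Every term is nonnegative on $(0,1)$, so monotone convergence permits integrating term by term:
$$\int_{0}^{1}\frac{1}{u}\log\left(\frac{1+u}{1-u}\right)du=2\sum_{k\ge 0}\frac{1}{2k+1}\int_{0}^{1}u^{2k}\,du=2\sum_{k\ge 0}\frac{1}{(2k+1)^{2}}.$$
The sum of reciprocal odd squares is $\sum_{n\ge 1}n^{-2}-\sum_{n\ge 1}(2n)^{-2}=\tfrac34\cdot\tfrac{\pi^2}{6}=\tfrac{\pi^2}{8}$, hence the displayed integral equals $\pi^2/4$ and $B(1)=2\cdot\pi^2/4=\pi^2/2$.

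An equivalent route avoids the substitution: write $\int_0^1 u^{-1}\log\frac{1+u}{1-u}\,du=\int_0^1 u^{-1}\log(1+u)\,du-\int_0^1 u^{-1}\log(1-u)\,du$, the two integrals being the standard dilogarithm values $\pi^2/12$ and $-\pi^2/6$, so that the combination equals $\pi^2/4$. I do not anticipate a genuine obstacle: the content is a single definite integral, and the only steps needing a remark are the convergence of the improper integral at $s=1$ and the interchange of sum and integral, both routine. Combined with the previous lemma giving $B(t)=\tfrac12(\log t)^2+C$, this pins down $C=\pi^2/2$ and completes the proof that $B(t)=(\pi^2+(\log t)^2)/2$.
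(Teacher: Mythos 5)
Your evaluation is correct, and it reaches $\pi^2/2$ by a genuinely different route from the paper. The paper substitutes $w=(s+1)/(s-1)$ and then $w=e^x$ to rewrite $B(1)$ as $\int_{-\infty}^{\infty}2x\,(e^{x}-e^{-x})^{-1}\,dx$, which it evaluates by integrating $2g(z)/(e^{z}-e^{-z})$ around a rectangular contour of height $2\pi i$ and computing the residue at $z=i\pi$. You instead substitute $s=1/u$ to land on $2\int_0^1 u^{-1}\log\frac{1+u}{1-u}\,du$, expand the logarithm in its power series, and integrate term by term to reduce everything to $\sum_{k\ge 0}(2k+1)^{-2}=\pi^2/8$; your substitution and the monotone-convergence justification for the interchange both check out, as does the alternative via the dilogarithm values $\int_0^1 u^{-1}\log(1\pm u)\,du$. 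Your argument is the more elementary one — it needs no complex analysis, only the Basel value $\zeta(2)=\pi^2/6$ — while the paper's contour method is self-contained in a different sense (it derives the constant from a single residue rather than quoting $\zeta(2)$) and its intermediate form $\int_{-\infty}^{\infty}2x\,(e^{x}-e^{-x})^{-1}\,dx$ is a standard shape that generalizes to related integrals. Either proof is complete and correct.
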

\begin{proof} $$B(1)=\int_{1}^{\infty}\frac{1}{s}\log\left(\frac{(s+1)^2}{(s-1)^2}\right)\ ds 
= \int_{1}^{\infty}\frac{2}{s}\log\left(\frac{s+1}{s-1}\right)\ ds$$
Set $w=(s+1)/(s-1)$ then $$s=\frac{w+1}{w-1}=1+\frac{2}{w-1}\quad\text{and}\quad\frac{ds}{dw}=-2(w-1)^{-2}$$
hence $$B(1)=\int_{1}^{\infty}\frac{2}{s}\log\left(\frac{s+1}{s-1}\right)\ ds = \int_{\infty}^{1}2\left(\frac{w-1}{w+1}\right)\log w\ (-2)(w-1)^{-2} dw
=  \int_{1}^{\infty}\frac{4\log w \ dw}{w^2-1} $$
Set $w=e^x$ this becomes
$$B(1)=\int_0^{\infty}\frac{4x\ e^xdx}{e^{2x}-1}=\int_0^{\infty}\frac{4x\ dx}{e^{x}-e^{-x}}$$
The integrand is even so $$B(1)=\int_{-\infty}^{\infty}\frac{2x\ dx}{e^{x}-e^{-x}}$$
Define $$g(z)=\frac{i}{4\pi}z(z-2\pi i)$$ then $$g(z)-g(z+2\pi i)=z$$ 
Hence
$$B(1)=\int_{-\infty}^{\infty}\frac{2x\ dx}{e^{x}-e^{-x}} = 2\int_{-\infty}^{\infty}\frac{g(x)-g(x+2\pi i)}{e^{x}-e^{-x}}\ dx$$
\begin{figure}
	\psfrag{a}{$-R$}
	\psfrag{b}{$R$}
	\psfrag{c}{$R+2\pi i$}
	\psfrag{d}{$-R+2\pi i$}
	\psfrag{g}{$\Gamma_R$}
	\psfrag{e}{$\pi i$}
	\psfrag{f}{Residue = $-1/2$}
	\includegraphics[scale=0.98]{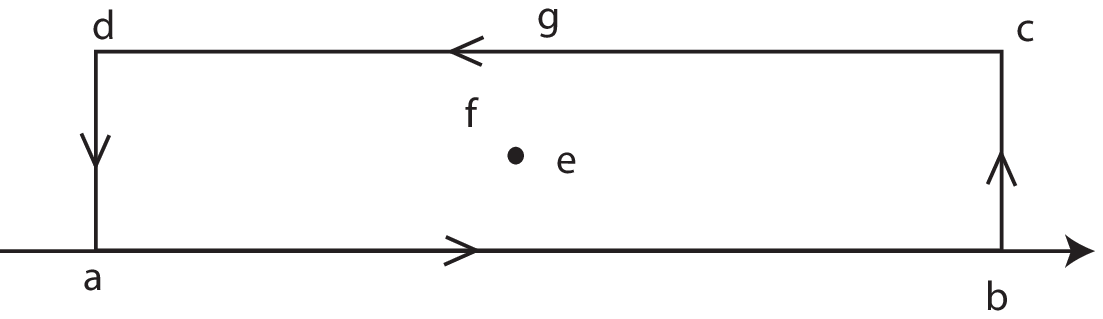}
\end{figure}
We claim this equals the limit of the contour integrals
$$B(1)=\lim_{R\to\infty}\oint_{\Gamma_R} \frac{2g(z)}{e^{z}-e^{-z}}\ dz$$
Where $\Gamma_R$ is the rectangle
$$([-R,R]\times\{0,2\pi i\})\cup (\{\pm R\}\times[0,2\pi i])$$
oriented counterclockwise.
Observe that on the vertical sides $(\{\pm R\}\times[0,2\pi i])$ the integrand goes to $0$ as $R\to\infty$.
The integral along the horizontal sides $([-R,R]\times\{0,2\pi i\})$ with the orientation specified is
$$2\int_{-R}^{R}\frac{g(z)-g(z+2\pi i)}{e^{z}-e^{-z}}\ dz$$
This proves the claim. To evaluate the contour integral we observe the only singularity inside $\Gamma_R$ of
$$\frac{2g(z)}{e^{z}-e^{-z}}$$ is a simple pole
at $z=i\pi$. Now $$2g(i\pi)=2\ \frac{i (i\pi )(-i\pi )}{4\pi}=\frac{i\pi }{2} $$
Also the denominator is $e^z-e^{-z}= 2z +\cdots$ has residue $-1/2$ at $z=i\pi$ because
\begin{align*}
\text{residue} & = \lim_{z\to i \pi}\frac{z-i\pi}{e^z-e^{-z}}\\
& = \lim_{w\to0}\frac{w}{e^{w+i\pi}-e^{w-i\pi}}\\
& =\lim_{w\to 0}\frac{w}{e^{-i\pi}(e^w-e^{-w})}\\
& =-\lim_{w\to 0}\frac{w}{(1+w+\cdots)-(1-w+\cdots)}\\
& =-\lim_{w\to 0}\frac{w}{2w+\cdots}\\
& =-\frac{1}{2}
\end{align*}
 Thus the residue of the integrand at $i\pi$ is $(-1/2)(i\pi/2)=-i \pi/4$. Cauchy's theorem
gives the contour integral is $$(2\pi i)(-i\pi/4)=\pi^2/2$$
\end{proof}

\section{ideal triangulations}

Ideal triangulations of surfaces were introduced by Thurston for hyperbolic surfaces, see \cite{Lack},\cite{Mosher}. 
The extension to properly convex surfaces is
routine.

\begin{definition} 
An {\em ideal triangulation} of a convex projective surface $F=\Omega/\Gamma$ is a decomposition
of $F$ into closed subsets with disjoint interiors called {\em ideal triangles}
such that each component of the preimage of an ideal triangle in $F$ is an ideal triangle in $\Omega$.
\end{definition}

\begin{figure}
	\psfrag{p}{$p$}
	\psfrag{q}{$q$}
	\psfrag{r}{$r$}
	\psfrag{T}{$T$}
	\psfrag{D}{$\Delta$}
	\psfrag{Om}{$\Omega$}
	\includegraphics[scale=0.5]{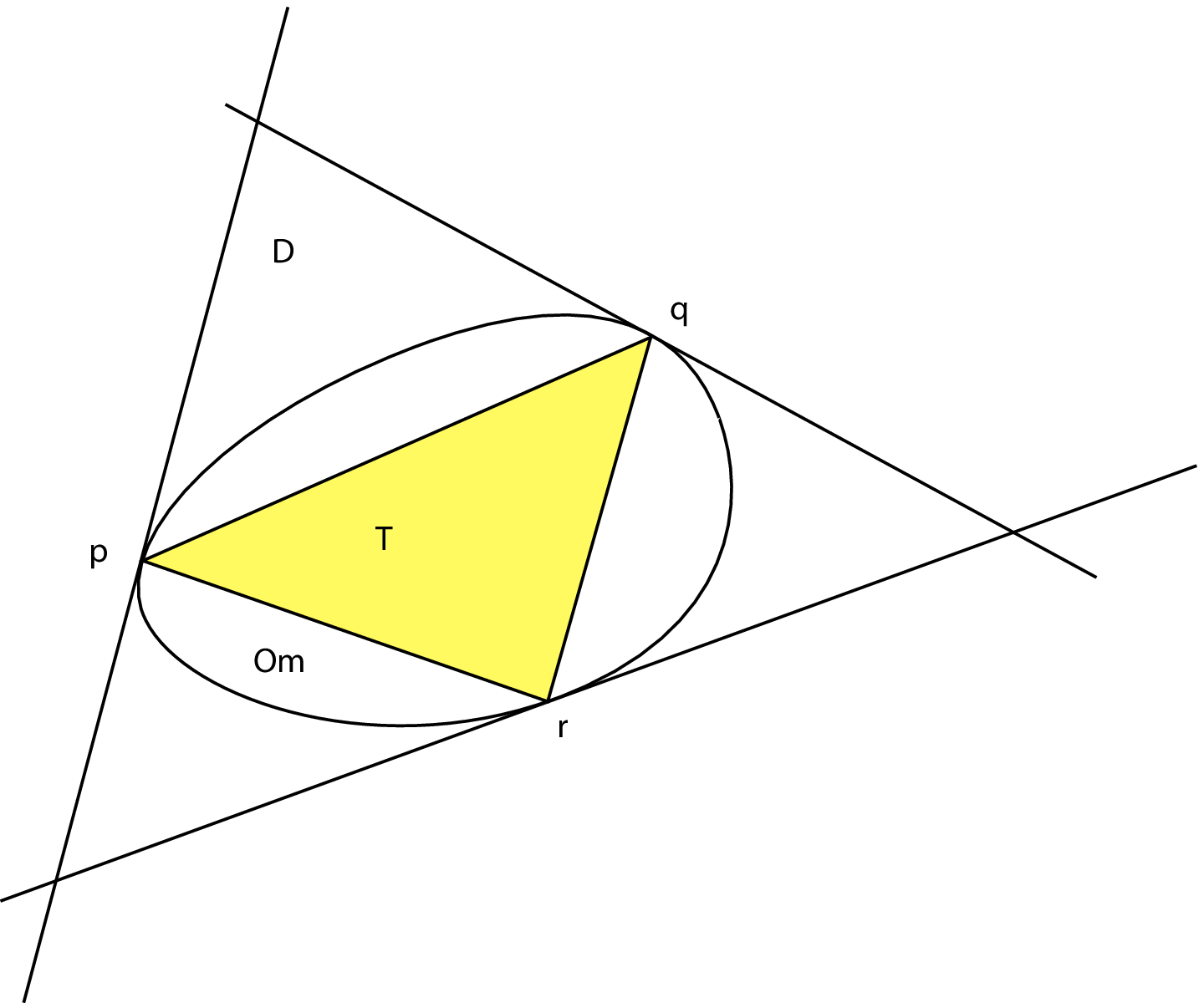}
	\caption{An ideal triangle $T$ in $\Omega$}\label{domaintriangle}
\end{figure}

\begin{proposition}\label{idealtriangnumber} If $F$ is a closed convex projective surface then $F$ has an ideal triangulation.
The number of ideal triangles is $2|\chi(F)|$.
\end{proposition}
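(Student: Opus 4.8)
\emph{Existence.} I would treat the two assertions separately. For existence, recall that it is classical, going back to Thurston (see \cite{Lack},\cite{Mosher}), that the underlying closed surface $S$ of genus $g\ge 2$ carries an ideal triangulation; fix one, with $1$-skeleton a finite system of disjoint complete simple curves whose complementary regions are ideal triangles. To make this geometric for $F=\Omega/\Gamma$, I would use that for a closed convex projective surface $\Omega$ is strictly convex with $C^1$ boundary: each edge, lifted to $\Omega$, has its two ends converging to two distinct points of $\partial\overline\Omega$, and replacing every lift by the projective segment joining its pair of endpoints produces a $\Gamma$-equivariant family of pairwise disjoint projective geodesics whose complementary regions are (automatically proper) ideal triangles in $\Omega$. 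This descends to an ideal triangulation of $F$, exactly as in the hyperbolic case.

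\emph{The count.} Let $\mathcal T$ be any ideal triangulation of $F$, with $T$ triangles and $E$ edges. Counting in two ways the incidences (triangle, one of its three sides), and using that $F$ is closed so that each edge carries exactly two of them, gives $3T=2E$. To determine $T$ I would fix an auxiliary hyperbolic metric $h$ on $F$ and straighten the $1$-skeleton of $\mathcal T$ to a disjoint union of complete simple $h$-geodesics; its complementary regions, being the straightenings of the $T$ open ideal triangles of $\mathcal T$, are $T$ hyperbolic ideal triangles, each simply connected and bounded by three complete geodesics, hence injectively covered by an ideal triangle of $\mathbb{H}^2$ and so of $h$-area $\pi$. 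Since $F$ is the union of these $T$ open triangles and a closed set of zero area, $\area_h(F)=T\pi$, while Gauss--Bonnet gives $\area_h(F)=2\pi|\chi(F)|$; therefore $T=2|\chi(F)|$, and then $E=3|\chi(F)|$.

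\emph{Where the work is.} Apart from the two straightening steps everything is routine bookkeeping. The one substantive point --- and it is the same point that is used in the hyperbolic setting, which is why the extension can be called routine --- is that a finite system of disjoint complete simple curves admits, within its isotopy class, a realization by a finite system of disjoint geodesics (for the Hilbert metric of $\Omega$ in the existence step, for $h$ in the count) that neither merges two edges nor changes the number or homeomorphism type of the complementary regions. Granting this, I would present the Gauss--Bonnet count as the main step and dispatch existence quickly.
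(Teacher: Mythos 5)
The paper offers no proof of this proposition beyond the remark that ideal triangulations are due to Thurston for hyperbolic surfaces \cite{Lack},\cite{Mosher} and that ``the extension to properly convex surfaces is routine,'' so there is no line-by-line comparison to make; your outline is a correct and standard way of supplying the missing argument, and it matches the route the paper is implicitly gesturing at (straighten a hyperbolic/topological ideal triangulation inside $\Omega$). Two points are worth making explicit. First, the straightening step silently uses that for a closed convex projective surface with $\chi(F)<0$ the domain $\Omega$ is strictly convex with $C^1$ boundary (Benz\'ecri \cite{benz}, which the paper itself invokes in Section 4): this is what guarantees that the two ends of a lifted edge converge to two \emph{distinct} points of $\partial\overline\Omega$, that the straightened geodesics stay disjoint, and that the complementary triangles are \emph{proper} ideal triangles (meeting $\partial\Omega$ only at their vertices), which is the hypothesis actually needed later for the shape parameter. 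You flag this as ``the one substantive point'' but should name strict convexity as the reason it works. Second, your Gauss--Bonnet count is a good choice: since the edges of an ideal triangulation of a \emph{closed} surface spiral onto closed geodesics, a purely combinatorial Euler-characteristic count requires some care about what the $1$-skeleton is, whereas ``$T$ disjoint open ideal triangles of hyperbolic area $\pi$ covering $F$ up to a set of measure zero'' gives $T\pi=2\pi|\chi(F)|$ directly; note that the identity $3T=2E$ is then not needed for the statement being proved. Finally, as stated the proposition admits the degenerate case $\chi(F)=0$ (e.g.\ the Hex torus $\Delta^{\circ}/\mathbb{Z}^{2}$), which has no ideal triangulation; your restriction to genus $g\ge 2$ is the intended hypothesis and should be recorded.
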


Suppose $F=\Omega/\Gamma$ is a compact, strictly convex projective surface with $\chi(F)<0$. Then
$\Omega\subset\RP^2$ has a unique tangent line
at each point of $\partial\overline\Omega$ by  \cite{benz}. If $T\subset\Omega$ is an ideal triangle with vertices $p,q,r\in\partial\Omega$. 
The tangent lines at $p,q,r$ contain the sides of
a triangle $\Delta$ which contains $\Omega$.  Following Fock and Goncharov \cite{FG} we define the {\em
shape} $t=t(T,\Omega)\in[1,\infty)$ of $T$ in $\Omega$ to be the shape $t(T,\Delta)$ of $T$ in $\Delta$ previously defined.

For a properly convex set $\Omega$ we denote the Hilbert area form
on $\Omega$ by $\omega_{\Omega}$. This pushes down to a 2-form
$\omega_F$ on $F$.  We denote the {\em Hilbert area} of a measurable
 subset $X\subset F$ by
$$area_{F}(X)=\int_X\omega_{F}$$ Since $\Omega\subset\Delta$ it follows that $d_{\Delta}\le d_{\Omega}$ 
and $\omega_{\Delta}\le\omega_{\Omega}$. It follows that if $T\subset F$ is an ideal triangle with 
shape parameter $t$
 then
$$area_{F}(T)\ge B(t)\ge\pi^2/2$$
Since $F$ contains $2|\chi(F)|$ ideal triangles with disjoint interiors theorem (\ref{maintheorem}) follows from
(\ref{idealtriangnumber}) and (\ref{triangleprop}).

\small
\bibliography{refs.bib} 
\bibliographystyle{abbrv} 

\gap
\noindent\address{DC: Department of Mathematics, University of California, Santa Barbara, CA 93106}\\
\address{IA: Department of Mathematics and Computer Science, Wesleyan University, Science Tower 655,
265 Church Street, Middletown, CT 06459-0128}
\address{}\\
\email{cooper@math.ucsb.edu}\\
\email{iadeboye@wesleyan.edu}\\

\end{document}